\theoremstyle{theorem}
\newtheorem{theorem}{Theorem}
\newtheorem{cor}{Corollary}
\newtheorem{definition}{Definition}
\newtheorem{lemma}{Lemma}
\newtheorem{prop}{Proposition}
\newtheorem{conj}{Conjecture}
\newtheorem{question}{Question}
\theoremstyle{definition}
\newtheorem*{remark}{Remark}
\newcommand{\GL}{\operatorname{GL}}
\newcommand{\Ord}{\operatorname{Ord}}
\newcommand{\pa}{\partial}
\newcommand{\eps}{\varepsilon}
\newcommand{\bp}{\textbf{p}} 
\newcommand{\bx}{\textbf{x}} 
\newcommand{\bv}{\textbf{v}}
\newcommand{\by}{\textbf{y}}
\newcommand{\bu}{\textbf{u}}
\newcommand{\bw}{\textbf{w}}
\newcommand{\bL}{\textbf{L}}
\newcommand{\bM}{\textbf{M}}
\newcommand{\bgamma}{\boldsymbol\gamma}
\newcommand{\bq}{\textbf{q}}
\newcommand{\bz}{{\bf z}}
\newcommand{\cP}{{\mathcal{P}}}
\newcommand{\N}{\mathbb{N}}
\newcommand{\R}{\mathbb{R}}
\newcommand{\C}{\mathbb{C}}
\newcommand{\Z}{\mathbb{Z}}
\newcommand{\cL}{\mathcal{L}}
\begin{document}

 \title[Crystals, polytopes, and eigenfunctions]{Crystallographic groups, strictly tessellating polytopes, and analytic eigenfunctions} 

\author{}
\author[Rowlett, Blom, Nordell, Thim, \& Vahnberg]{Julie Rowlett, Max Blom, Henrik Nordell, \\ Oliver Thim, and Jack Vahnberg}

\maketitle

\begin{abstract} 
The mathematics of crystalline structures connects analysis, geometry, algebra, and number theory.  The planar crystallographic groups were classified in the late 19th century.  One hundred years later, B\'erard proved that the fundamental domains of all such groups satisfy a very special analytic property:  the Dirichlet eigenfunctions for the Laplace eigenvalue equation are all trigonometric functions.  In 2008, McCartin proved that in two dimensions, this special analytic property has both an equivalent algebraic formulation, as well as an equivalent geometric formulation.  Here we generalize the results of B\'erard and McCartin to all dimensions.  We prove that the following are equivalent:  the first Dirichlet eigenfunction for the Laplace eigenvalue equation on a polytope is real analytic, the polytope strictly tessellates space, and the polytope is the fundamental domain of a crystallographic Coxeter group.  Moreover, we prove that under any of these equivalent conditions, all of the eigenfunctions are trigonometric functions.  In conclusion, we connect these topics to the Fuglede and Goldbach conjectures and give a purely geometric formulation of Goldbach's conjecture. 
\end{abstract} 

\section{Introduction}
In \em The Grammar of Ornament, \em published in 1856, Owen Jones wrote \cite{jones}: 
\begin{quote} 
 Whenever any style of ornament commands universal admiration, it will always be found to be in accordance with the laws which regulate the distribution of forms in nature.  
\end{quote} 
In the case of crystals, the laws which regulate their shape are dictated by the crystallographic groups.  

\subsection{Crystallographic groups} 
A crystal or crystalline solid is a solid material whose constituents, such as atoms, molecules, or ions, are arranged in a highly ordered microscopic structure; for a two dimensional example see Figure \ref{fig:graph}.  Mathematically we can identify the locations of the constituents with the points of a lattice, also known as a \em crystal lattice.  \em  Every two dimensional crystal has a symmetry group which is a plane crystallographic group.  Such a group consists of isometries of the plane. There are three basic types of isometries:  translations, rotations and reflections.  These form a group under composition.  The patterns in Figure \ref{fig1} have symmetry groups which are \em plane crystallographic groups.  \em  These are subgroups of the group of isometries of the plane which are topologically discrete and contain two linearly independent translations.  Equivalently, a plane crystallographic group\footnote{These are also known as wallpaper groups.} is a co-compact subgroup of the group of isometries of the plane. A subgroup in this context is called co-compact if the quotient space $\R^2/\Gamma$ by the subgroup, $\Gamma$, is compact.  The classification of these groups, up to equivalence, was achieved at the end of the 19th century by E. S. Fedorov \cite{fed} and A. Schoenflies \cite{schoen}.  Two groups are equivalent if they are equal up to a translation. In two dimensions, up to this notion of equivalence, there are seventeen crystallographic groups. 

\begin{figure} \centering \includegraphics[width=8cm]{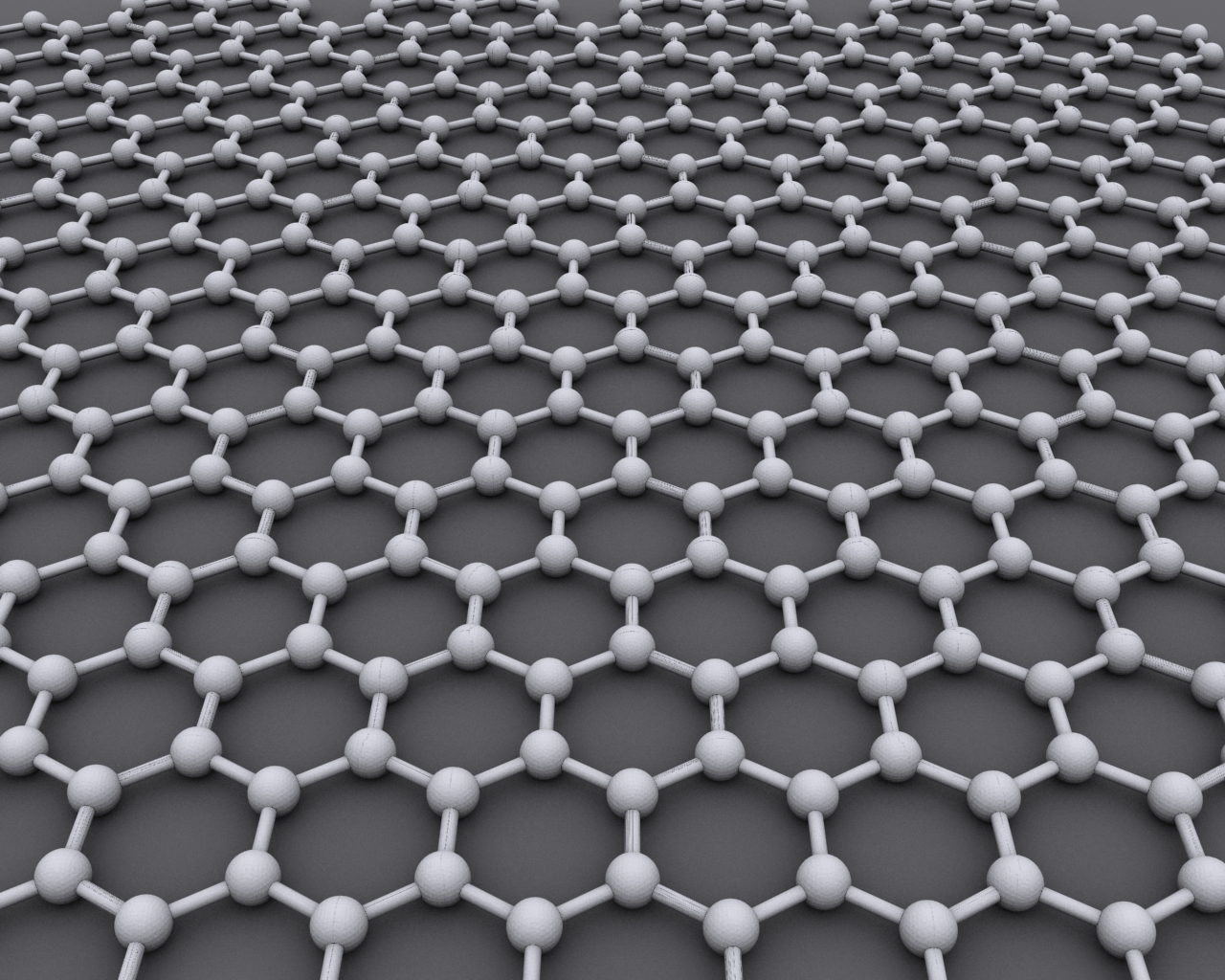} \caption{Graphene is an allotrope of carbon in the form of a two-dimensional, atomic-scale hexagonal lattice such that each point in the lattice corresponds to an atom.  This image is licensed under the Creative Commons Attribution-Share Alike 3.0 Unported license at \url{https://commons.wikimedia.org/wiki/File:Graphen.jpg}.} \label{fig:graph}  \end{figure}

One can also consider crystals in three dimensions, and mathematically we may generalize all of these notions to $\R^n$.  An $n$-dimensional crystallographic group is a discrete group of isometries of $\R^n$ which is co-compact.  Fedorov \cite{fed} and Schoenflies \cite{schoen} proved that there are, up to equivalence, 219 crystallographic groups in $\R^3$.  In 1910, Bieberbach proved that for any $n$, there are only finitely many $n$-dimensional crystallographic groups up to equivalence \cite{bieber1, bieber2}, thereby solving Hilbert's 18th problem.  

\subsection{Strictly tessellating polytopes and our main result}  
Crystal lattices create a perfectly regular pattern, and the fundamental domain of the associated full-rank lattice tessellates space by translation.  Another way to create a perfectly regular pattern is by `strict tessellation.'  This is a notion specific to polytopes. 

\begin{definition} \label{def:polytope} 
The set of all one dimensional polytopes is the set of all bounded open intervals, 
$$\wp_1 := \{ (a,b) : -\infty < a < b < \infty \}.$$
Inductively we define the set of polytopes in $\R^n$ for $n\geq 2$ to be the set of connected bounded domains in $\R^n$ such that $\Omega \in \wp_n$ if and only if 
$$\pa \Omega = \bigcup_{j=1} ^m \overline{P_j}, \quad P_j \cong Q_j \in \wp_{n-1}.$$
Above, the boundary of $\Omega$ consists of the closures of $n-1$ dimensional polytopes, $P_j$.  Each $P_j$ is contained in an $n-1$ dimensional hyperplane, which is a set of the form 
$$\{ \bx \in \R^n : \textbf{M} \cdot \bx = b \},$$
for some fixed $\bM \in \R^n$ and $b \in \R$.  Such a hyperplane is canonically identified with $\R^{n-1}$ so that with this identification $P_j$ is canonically identified with $Q_j \in \wp_{n-1}$, which is the meaning of $P_j \cong Q_j$.  
\end{definition} 

\begin{figure} \centering \includegraphics[width=5cm]{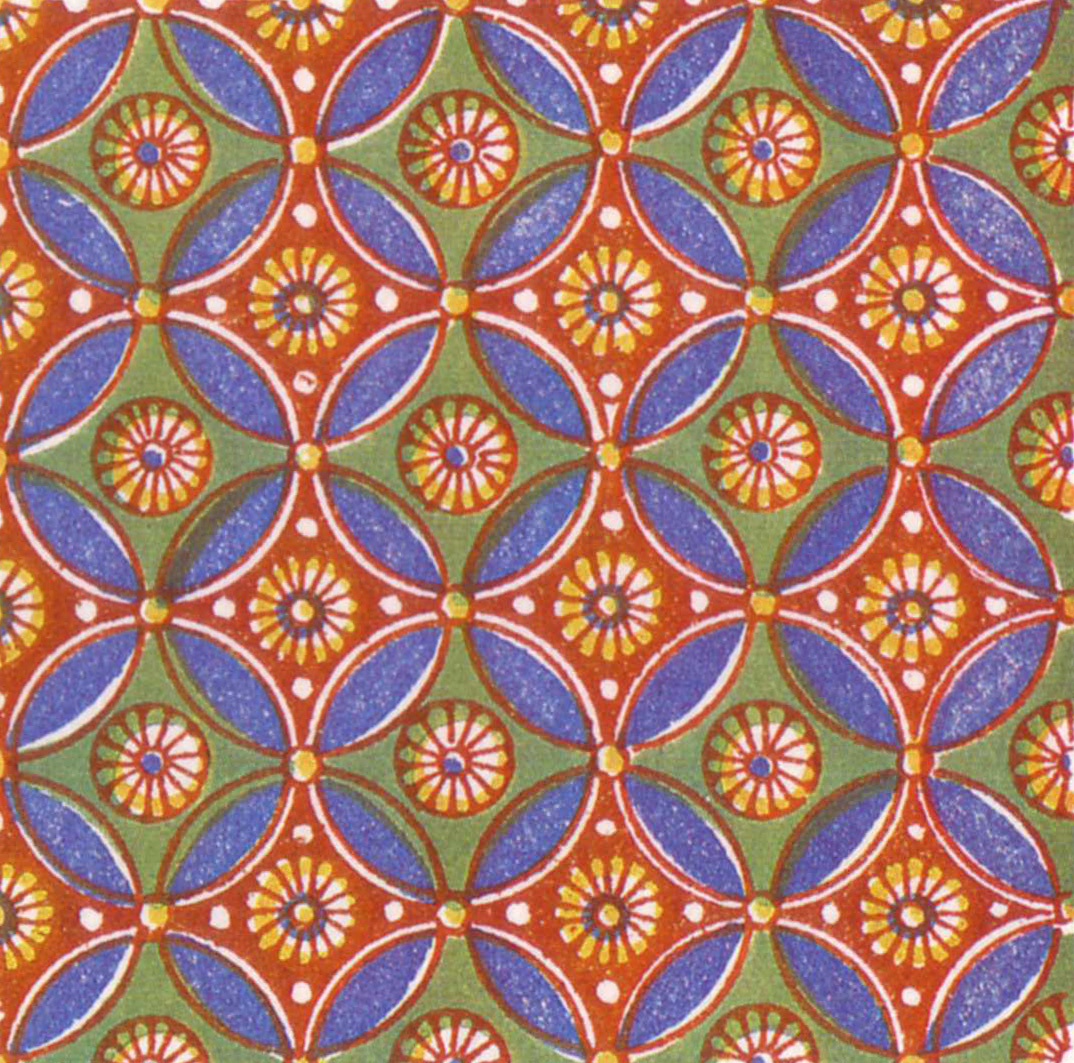} \includegraphics[width=5cm]{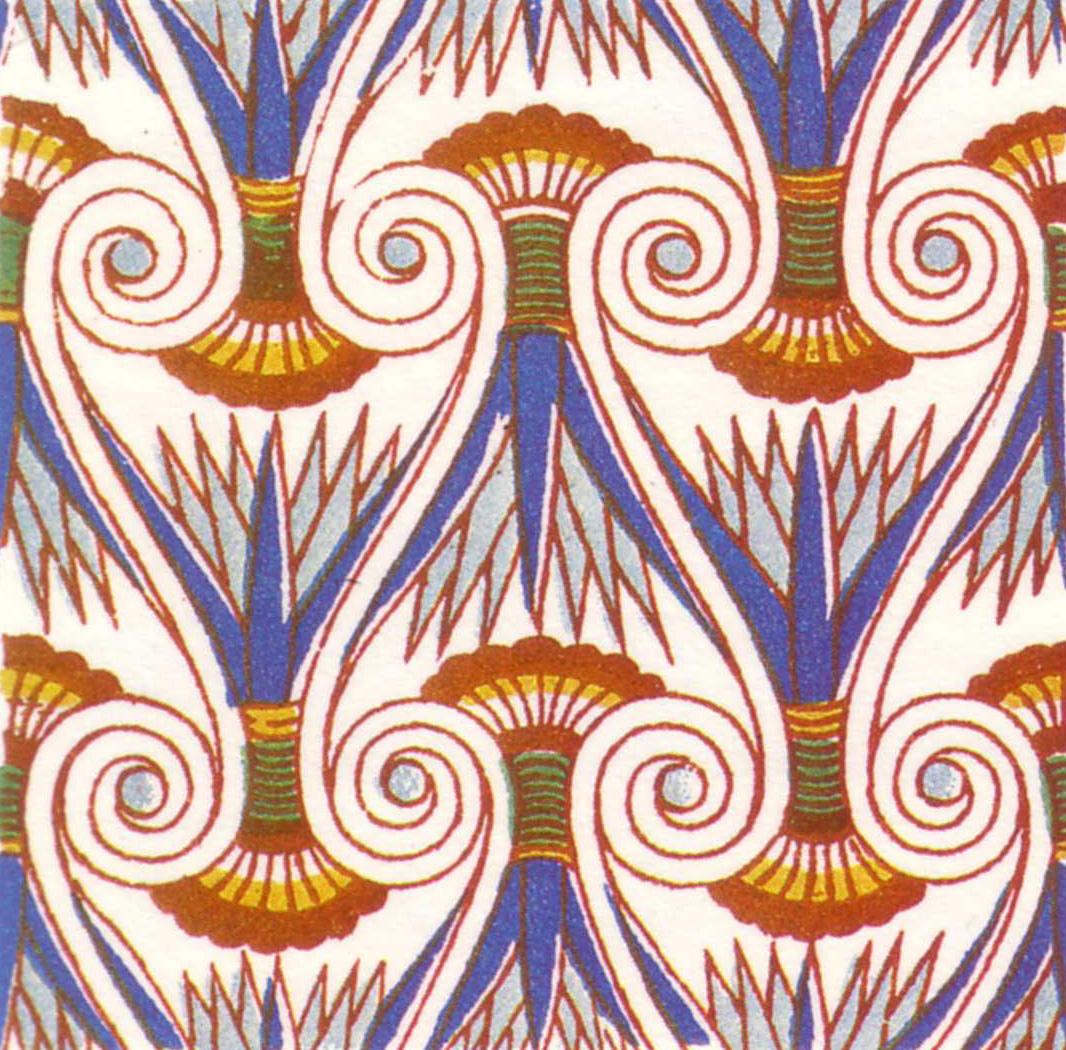} \caption{Two Egyptian patterns whose symmetry groups are planar crystallographic groups.  These patterns were documented by Owen Jones in 1856 \cite[Egyptian No 7 (plate 10), images 8 and 13]{jones}.}  \label{fig1} 
\end{figure}

Next we introduce the notion of strict tessellation. We are not aware of the term `strict tessellation' in the literature, but it may be known under a different name.  An example of a strict tessellation of the plane is given in Figure \ref{fig:tesselating}.  For an example of a tessellation of the plane which is not strict, see Figure \ref{fig:nontesselating}.  

\begin{figure}[h]
\centering
        \includegraphics[width=0.48\textwidth]{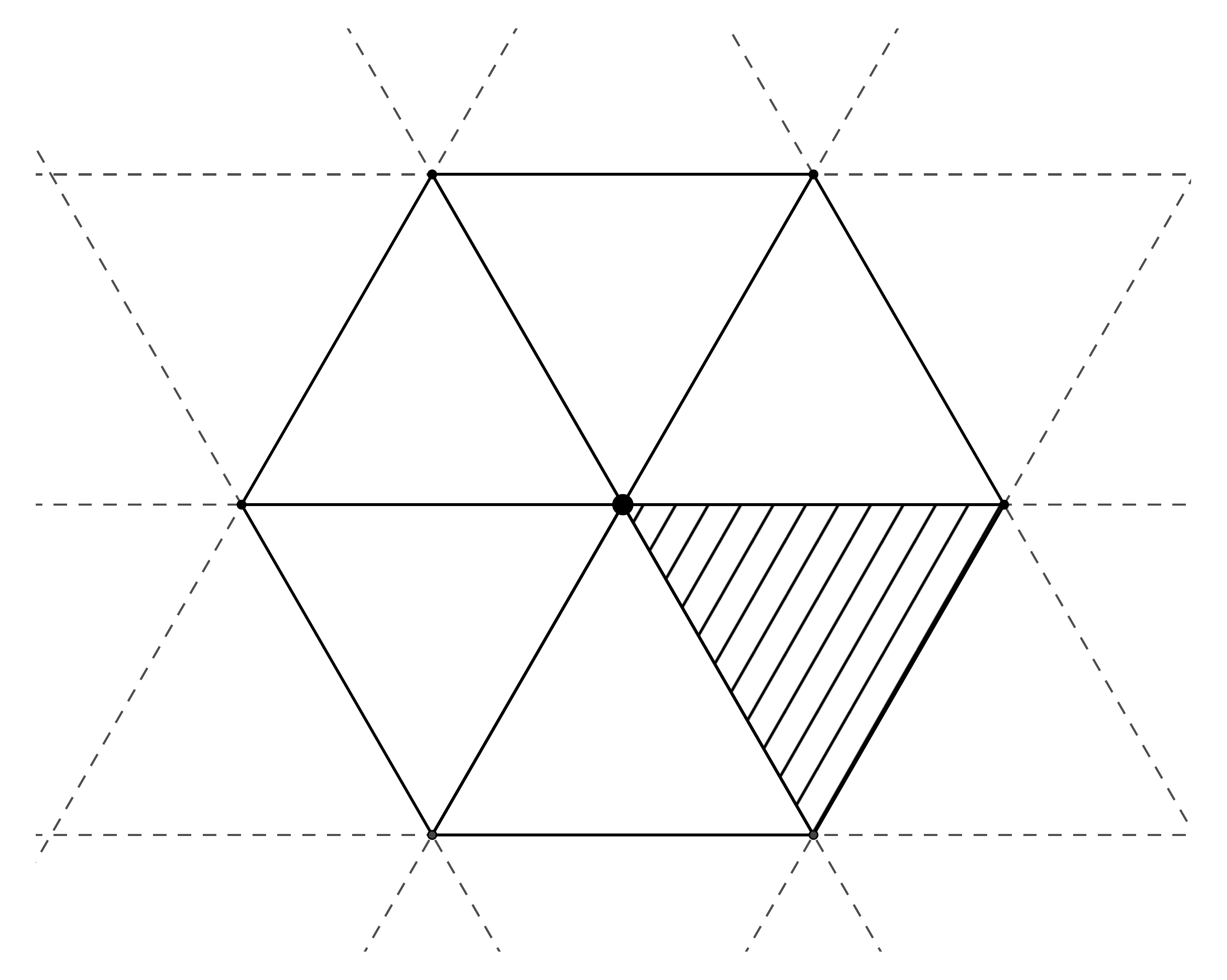}
        \caption{Equilateral triangles are shown here to strictly tessellate the plane.}
        \label{fig:tesselating}
\end{figure}

\begin{definition} \label{defst} A polytope $\Omega \in \wp_n$ strictly tessellates $\R^n$ if 
$$\R^n = \bigcup_{j\in \Z} \overline{\Omega_j}.$$
Above, each $\Omega_j$ is isometric to $\Omega$, and they are obtained by reflecting $\Omega$ across its boundary faces.  Furthermore, the hyperplanes which contain the boundary faces of each $\Omega_j$ have empty intersection with (the interior of) $\Omega_k$, for all $j$ and $k$.  
\end{definition} 

One dimensional polytopes always strictly tessellate because for any real numbers $a<b$, 
$$\R = \bigcup_{j \in \Z} \left[ j(b-a)+a, j(b-a) + b \right].$$

\begin{remark}
Any polytope covers $\R^n$ by reflection.  If there were to be a gap, that is a region of $\R^n$ not covered by reflecting the polytope across its boundary faces, such a gap will border a face of a reflected copy of the polytope. Then we simply reflect the polytope again and do this for all bordering faces until we have filled the gap.  Thus one can always cover $\R^n$ by repeatedly reflecting any polytope across its boundary faces, but there may be significant overlap.  If the (interior of the) reflected copies of the polytope have empty intersections, then we obtain a tessellation. 
\end{remark}

\begin{figure}[h]
\centering
        \includegraphics[width=0.48\textwidth]{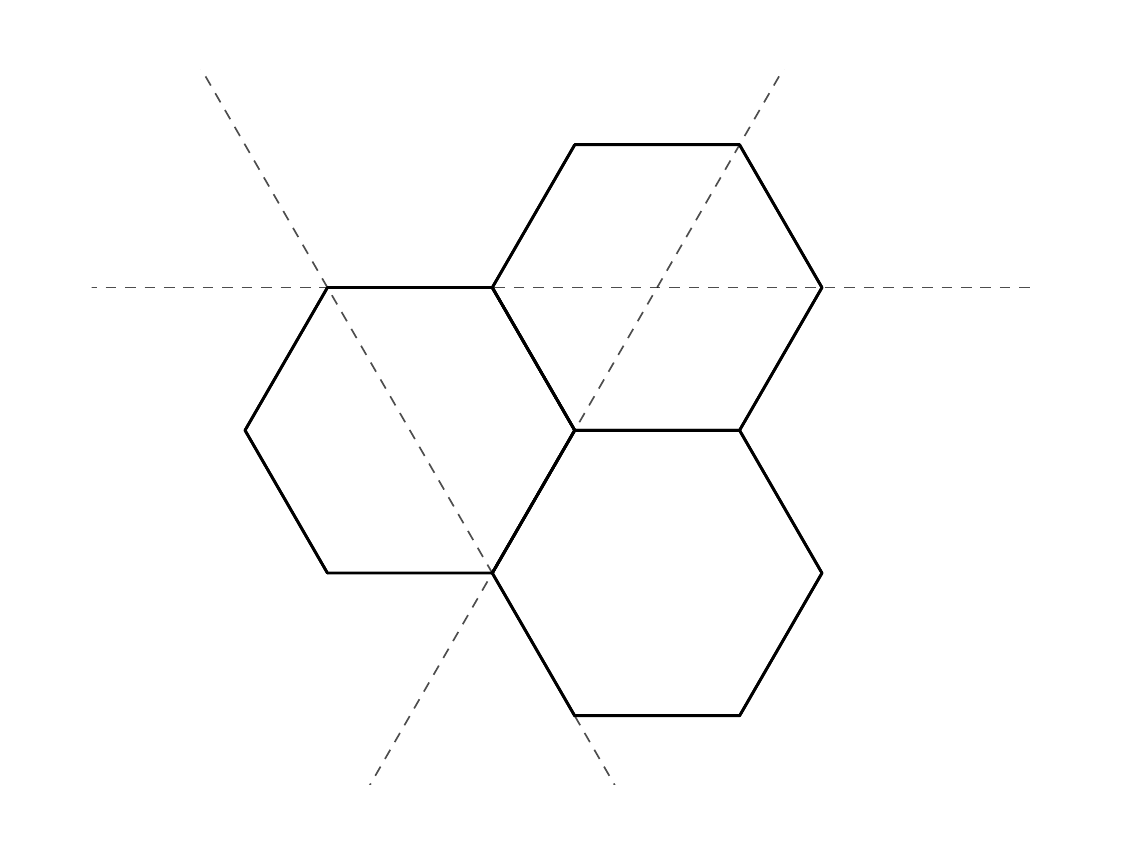}
        \caption{Although it is well known that regular hexagons tessellate the plane by reflection, the tessellation is not strict, because the lines which contain the edges of the hexagon cut through the interior of the reflected copies.}
        \label{fig:nontesselating}
\end{figure}

In 2008, McCartin proved a remarkable classification theorem \cite{mccartin}, connecting geometry and analysis.  Recall 
the Laplacian on $\R^n$ is the partial differential operator 
$$\Delta := - \sum_{k=1} ^n \frac{\pa^2}{\pa x_k ^2}.$$
The Laplace eigenvalue problem for a domain $\Omega \subset \R^n$ with the Dirichlet boundary condition is to find all functions $u$ which are not identically zero and satisfy
$$\Delta u = \lambda u, \quad \left . u\right|_{\pa \Omega} = 0, \textrm{ for some constant } \lambda.$$
This is a difficult problem, because in general it is impossible to compute these numbers $\lambda$.  However, using the tools of functional analysis \cite{courhil} one can prove that these eigenvalues are discrete and positive and therefore can be ordered 
$$0 < \lambda_1 \leq \lambda_2 \leq \ldots \uparrow \infty.$$
In this way we may speak of the first eigenfunction which has eigenvalue $\lambda_1$.  In one dimension, 
by definition \ref{def:polytope}, a polytope is a bounded open interval, $(a,b)$ for some real numbers $a<b$.  
The Laplace eigenvalue equation with the Dirichlet boundary condition on such a polytope is to find all functions $u$ such that there exists $\lambda \in \C$ with 
$$-u''(x) = \lambda u(x), \quad a < x < b, \quad u(a) = u(b) = 0.$$
Solutions to this equation are 
$$u_k (x) = \sin \left( \frac{x-a}{b-a} k \pi \right), \quad \lambda_k = \frac{k^2 \pi^2}{(b-a)^2}, \quad k \in \N.$$
Moreover, Fourier analysis \cite[Chapter 4]{folland} can be used to show that these are \em all \em the solutions to the equation.  These eigenfunctions are all trigonometric functions.  We can also define trigonometric functions on $\R^n$.  

\begin{definition} An eigenfunction $u$ for the Laplacian is \em trigonometric \em if it can be expressed as a finite sum of trigonometric functions, 
$$u(\bx) = \sum_{j=1} ^m a_j \sin(\bL_j \cdot \bx) + b_j \cos(\bM_j \cdot \bx).$$
Above, $a_j, b_j, \in \C$ and $\bL_j, \bM_j \in \R^n$ satisfy $||\bL_j||^2 = ||\bM_j||^2 = \lambda$ for all $j=1, \ldots, m$.  
\end{definition} 

\begin{remark}  \label{r:trigef} Since 
$$\cos(t) = \sin(t+\pi/2), \quad \forall t \in \R,$$
it is equivalent to define a trigonometric eigenfunction to be a function of the form 
$$u(\bx) = \sum_{j=1} ^m a_j \sin(\bL_j \cdot \bx + \phi_j).$$
Above, $a_j \in \C$, $\bL_j \in \R^n$, $\phi_j \in \R$, and $||\bL_j||$ are the same for all $j=1, \ldots, m$.  
\end{remark} 

In general, it is impossible to compute the eigenfunctions of an arbitrary polygonal domain.  Nonetheless, McCartin proved the following classification theorem which shows the equivalence of the analytic property, having trigonometric eigenfunctions, with the geometric property, strictly tessellating.  

\begin{theorem}[McCartin \cite{mccartin}] The only polygonal domains in the plane which have a complete set of trigonometric eigenfunctions for the Laplacian with the Dirichlet boundary condition are those which strictly tessellate the plane.  There are precisely four types:  rectangles, isosceles right triangles, equilateral triangles, and hemiequilateral triangles as shown in Figure \ref{fig:strikt_poly}.  
\end{theorem}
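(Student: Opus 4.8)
The statement packages an equivalence with a classification, so the plan is to prove the cycle of implications: (i) if $\Omega$ strictly tessellates then it has a complete set of trigonometric eigenfunctions; (ii) if $\Omega$ merely has a trigonometric \emph{first} eigenfunction then $\Omega$ must be one of the four listed types; and (iii) each of the four listed types strictly tessellates. Composing these closes the loop, and the Diophantine enumeration carried out in step (ii) \emph{is} the ``precisely four types'' assertion. Note that (ii) uses less than the hypothesis of the theorem, which only strengthens the result.

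For (i) I would run B\'erard's reflection (unfolding) argument. Given a Dirichlet eigenfunction $u$ on $\Omega$ with eigenvalue $\lambda$, define $\tilde u$ on $\R^2$ tile by tile: on the reflected copy reached by a word $w$ of reflections in the edge-lines, set $\tilde u = (-1)^{|w|}\,u\circ w^{-1}$. Strictness of the tessellation is exactly what makes this well defined: the copies tile $\R^2$ with pairwise disjoint interiors, along each shared face the two definitions agree because $u$ vanishes there and the odd reflection matches the normal derivative, and no face-line cuts through the interior of another tile, so there is no further gluing obstruction. Hence $\tilde u$ is continuous, and by the Schwarz reflection principle $\Delta\tilde u=\lambda\tilde u$ across every face, so on all of $\R^2$. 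The reflection group $G$ generated by the edge-reflections is discrete and cocompact, hence crystallographic, so it contains a finite-index translation lattice; $\tilde u$ therefore transforms by a character under that lattice, i.e.\ is quasiperiodic, and expanding in the corresponding Fourier series while noting that only finitely many frequencies $\bL$ in the relevant dual lattice satisfy $\|\bL\|^2=\lambda$ shows $\tilde u$, hence $u$, is a finite trigonometric sum. Completeness is automatic, since these are \emph{all} the Dirichlet eigenfunctions.

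For (ii), let $u_1>0$ be the first (simple) eigenfunction; by hypothesis it is trigonometric, hence real-analytic on all of $\R^2$. If $e$ is an edge of $\Omega$ lying on a line $\ell$, then $u_1|_\ell$ is a one-variable real-analytic function vanishing on the segment $e$, so it vanishes on all of $\ell$. Let $\sigma$ be reflection across $\ell$. Then $u_1$ and $-u_1\circ\sigma$ are both global real-analytic solutions of $\Delta w=\lambda_1 w$ with the same Cauchy data on the non-characteristic line $\ell$ (value zero, and matching normal derivative, since $\sigma$ reverses the normal), so by Cauchy--Kovalevskaya uniqueness $u_1=-u_1\circ\sigma$ everywhere. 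Thus $u_1$ is anti-invariant under reflection in each edge-line. Now examine a vertex $v$ of interior angle $\alpha$: the classical corner expansion for a Dirichlet solution of the Helmholtz equation gives, modulo higher-order terms, $u_1\sim\sum_{j\ge 1}c_j\,r^{j\pi/\alpha}\sin(j\pi\theta/\alpha)$ near $v$. Positivity of $u_1$ on the sector $\theta\in(0,\alpha)$ forces the leading term to have $j=1$ (for $j\ge 2$, $\sin(j\pi\theta/\alpha)$ changes sign on $(0,\alpha)$), and then real-analyticity at $v$ forces $\pi/\alpha\in\N$; a straight angle is not a genuine vertex, so $\alpha=\pi/k$ with $k\ge 2$. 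In particular every interior angle is $\le\pi/2$, so $\Omega$ is convex, and the angle sum of a convex $N$-gon reads $(N-2)\pi=\sum_{i=1}^N\pi/k_i$ with each $k_i\ge 2$, giving $N-2\le N/2$, hence $N\le 4$. The equation $\sum 1/k_i=N-2$ then has exactly the solutions $(k_i)=(2,2,2,2)$ for $N=4$ (a rectangle) and $(3,3,3)$, $(2,4,4)$, $(2,3,6)$ for $N=3$ (the equilateral, isosceles right, and hemiequilateral triangles).

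For (iii) one checks directly that a rectangle and each of these three triangles reflects to tile $\R^2$ strictly --- equivalently, each is the fundamental domain of one of the Euclidean reflection (crystallographic Coxeter) groups --- which is classical. I expect the main obstacle to be the corner rigidity in step (ii): upgrading ``$u_1$ is real-analytic'' to ``$\pi/\alpha\in\N$'' requires the local Bessel/Frobenius analysis of Helmholtz solutions near a corner together with genuine use of the positivity, hence simplicity, of the first eigenfunction to kill the spurious possibilities $\alpha=p\pi/q$ with $p>1$; making this airtight, and controlling the a priori shape of $\Omega$ (possibly non-convex, or not simply connected) before this analysis is in place, is where the real work lies. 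A secondary delicate point is the consistency of the unfolded function $\tilde u$ in step (i), which is precisely where strictness --- as opposed to mere tessellation, as for the regular hexagon --- is indispensable.
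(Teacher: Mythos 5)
Your outline is essentially sound, but note that the paper does not prove this statement at all: it is quoted as McCartin's theorem, and the paper's own work is the $n$-dimensional generalization (Theorem \ref{thm1}), proved by a related but different route. Where you classify the possible corner angles of $\Omega$ via the local Bessel/homogeneous expansion of the first eigenfunction (positivity forces the $j=1$ mode, analyticity forces $\pi/\alpha\in\N$) and then solve the angle-sum Diophantine equation to get the four shapes, the paper never touches corner asymptotics or enumeration: it uses only the Vanishing Planes Lemma, Lam\'e-type antisymmetry across each face hyperplane (the same Cauchy-data/uniqueness argument you give), and positivity of $u_1$ to conclude directly that repeated reflection produces a strict tessellation; the converse direction is delegated to root systems, alcoves, and B\'erard's Proposition rather than proved by your unfolding-plus-Fourier argument (which is, in effect, a two-dimensional proof of B\'erard's result). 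Your route buys the explicit ``precisely four types'' list, which the paper's machinery yields only implicitly (via the classification of rank-two affine Weyl groups $A_1\times A_1$, $A_2$, $B_2$, $G_2$); the paper's route buys dimension-independence and avoids the delicate corner analysis you correctly flag as the hard step. Two small repairs to your sketch: in step (i) the sign $(-1)^{|w|}$ is well defined because it equals the determinant of the linear part of $w$ (and strictness gives simple transitivity of the reflection group on the tiles, so the tile-by-tile definition is consistent); moreover translations in that group are even words, so $\tilde u$ is genuinely periodic under the translation lattice, not merely quasiperiodic, which is what licenses the finite Fourier expansion. In step (ii) the cleanest version of the corner rigidity is to look at the lowest-order nonzero homogeneous term of the Taylor series of $u_1$ at the vertex: it is a harmonic homogeneous polynomial of some degree $d$, vanishing on both edge lines and nonnegative on the sector, hence (up to rotation) a positive multiple of $r^{d}\sin(d\theta)$ with $\alpha=\pi/d$; this also disposes of the possibility that all Bessel coefficients vanish, since a nonzero real-analytic $u_1$ cannot vanish to infinite order at a point. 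Your exclusion of non-convex or multiply connected $\Omega$ does follow from $\alpha=\pi/k\le\pi/2$ at every vertex (a hole or reflex vertex forces an interior angle exceeding $\pi$), so that acknowledged gap closes easily.
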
 

\begin{remark} \label{r:rectangle} The Laplace eigenfunctions for a rectangular domain with vertices at the points $(0, 0)$, $(a, 0)$, $(0, b)$ and $(a, b)$ with the Dirichlet boundary condition can be computed using separation of variables which reduces the problem to two one-dimensional problems.  The resulting eigenfunctions are indexed by $m, n \in \N$.  For Cartesian coordinates $\bx = (x,y) \in \R^2$, the eigenfunctions are 
$$u_{m,n} (x,y) = \sin \left( \frac{ m \pi x}{a} \right) \sin \left( \frac{n \pi y}{b} \right).$$
Using trigonometric identities, 
$$u_{m,n} (x,y) = \frac 1 2 \left[ \cos\left( \begin{bmatrix} \frac{m \pi}{a} \\ - \frac{n \pi}{b} \end{bmatrix} \cdot \bx \right)  - \cos\left( \begin{bmatrix} \frac{m \pi}{a} \\ \frac{n \pi}{b} \end{bmatrix} \cdot \bx \right) \right].$$
Consequently, these are trigonometric eigenfunctions. 
\end{remark}

\begin{figure}[h]
\includegraphics[width=0.8\textwidth]{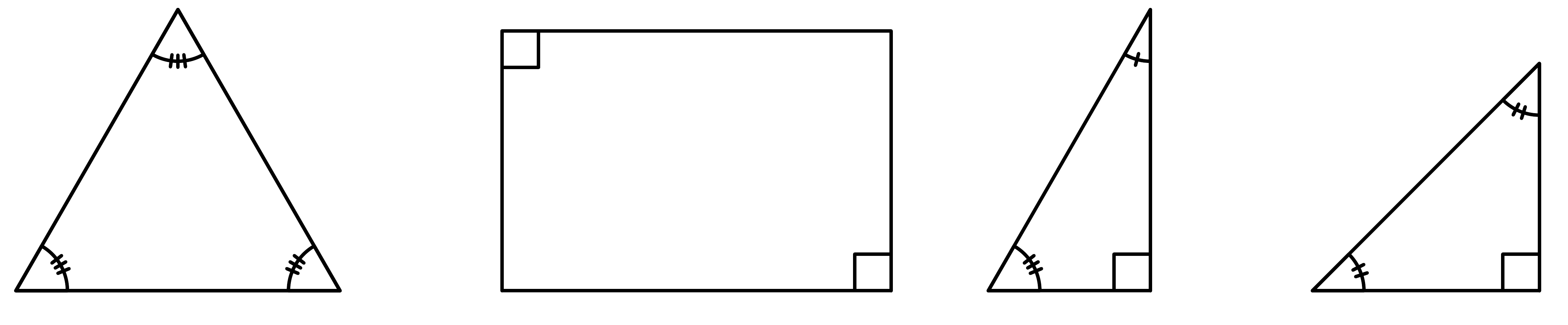} 
\caption{From left to right above we have an equilateral triangle, rectangle, hemi-equilateral triangle, and an isosceles right triangle.  These are the four types of polygons which strictly tessellate the plane.}
\label{fig:strikt_poly}
\end{figure} 

Our main result is a generalization to all dimensions.

\begin{theorem} \label{thm1} Assume that $\Omega$ is a polytope in $\R^n$. Then the following are equivalent:
\begin{enumerate} 
\item The first eigenfunction for the Laplace eigenvalue equation with the Dirichlet boundary condition extends to a real analytic function on $\R^n$. 
\item $\Omega$ strictly tessellates $\R^n$.
\item $\Omega$ is congruent to a fundamental domain of a crystallographic Coxeter group as defined in Bourbaki \cite[VI.25 Proposition 9 p. 180]{bourbaki}, and is also known as an alcove \cite[p. 179]{berard}; see also \S 3.   
\end{enumerate} 
\end{theorem}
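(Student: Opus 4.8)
The plan is to run the cycle of implications $(2)\Rightarrow(3)\Rightarrow(1)\Rightarrow(2)$, with the concluding ``Moreover'' statement falling out of the proof of $(3)\Rightarrow(1)$. For $(2)\Rightarrow(3)$, let $\Gamma$ be the group generated by the reflections in the hyperplanes carrying the facets of $\Omega$, and let $\mathcal{H}$ be the union of all hyperplanes carrying facets of the reflected copies $\Omega_j$ from Definition \ref{defst}. Strictness says $\mathcal H$ misses every $\Omega_k$, so each open connected $\Omega_k$ lies in a single connected component $C$ of $\R^n\setminus\mathcal H$; since $\partial\Omega_k\subseteq\mathcal H$ this forces $\Omega_k=C$. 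Hence the $\Omega_j$ are the (pairwise disjoint) chambers of the arrangement $\mathcal H$, $\Gamma$ permutes them transitively with $\Omega$ as fundamental chamber, and — since $\overline\Omega$ is compact, the chambers are congruent, and fixed inradius balls inside them are disjoint, so the family is locally finite — $\Gamma$ is a discrete, cocompact, reflection-generated isometry group of $\R^n$. By the structure theory of Euclidean reflection groups, such a group is a crystallographic Coxeter group and $\Omega$ is an alcove \cite[VI.2.5]{bourbaki}.

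For $(3)\Rightarrow(1)$ together with the ``Moreover'': if $\Omega$ is the alcove of $\Gamma=W\ltimes\Lambda$, then functions on $\Omega$ satisfying the Dirichlet condition correspond to functions on $\R^n$ transforming by the sign character $\epsilon$ of $\Gamma$; as $\epsilon$ is trivial on $\Lambda$, these descend to the torus $\R^n/\Lambda$ and are $W$-anti-invariant. Diagonalizing the Laplacian on the torus by the characters $e^{i\langle\xi,\bx\rangle}$ with $\xi$ in the dual lattice, the anti-symmetrizations $\sum_{w\in W}\epsilon(w)e^{i\langle w\xi,\bx\rangle}$ furnish a complete orthogonal system of Dirichlet eigenfunctions of $\Omega$; this is B\'erard's construction \cite{berard}. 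Each is trigonometric, hence real-analytic on $\R^n$; since a polytope is connected, $\lambda_1$ is simple, so the first eigenfunction is the lowest member of this family. This gives $(3)\Rightarrow(1)$ and that \emph{all} eigenfunctions are trigonometric.

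The heart of the matter is $(1)\Rightarrow(2)$. Let $u>0$ be the first Dirichlet eigenfunction; by hypothesis $u$ extends real-analytically to $\R^n$, and then $\Delta u-\lambda_1 u$, being real-analytic and zero on the open set $\Omega$, vanishes everywhere, so $\Delta u=\lambda_1 u$ on all of $\R^n$. For a facet hyperplane $H$ with reflection $R$, the function $v:=u+u\circ R$ is a real-analytic eigenfunction, is $R$-invariant, and vanishes on the relative interior of the facet, an open subset of $H$; writing $v=\sum_{k\ge0}a_k(x')x_n^{2k}$ in coordinates with $H=\{x_n=0\}$, the vanishing gives $a_0\equiv0$, and a short induction matching powers of $x_n$ in $\Delta v=\lambda_1 v$ forces $v\equiv0$, i.e. $u\circ R=-u$. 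Thus $u$ vanishes on every facet hyperplane of $\Omega$ and on all its $\Gamma$-translates. Next, at a codimension-two face $F$ where facets meet at dihedral angle $\theta$ along an $(n-2)$-plane $L$, the two facet reflections $R,R'$ satisfy $u\circ(RR')=u$, so $u$ is invariant under rotation by $2\theta$ about $L$. Working near a relative-interior point of $F$, where $\Omega$ is isometric to a planar wedge of opening $\theta$ crossed with a ball in $L$, analyticity of $u$, its positivity strictly inside the wedge, its vanishing on the two sides, and the alternation of sign of $u$ on consecutive $\theta$-wedges generated by $R,R'$ force $2\pi/\theta$ to be an even integer, i.e. $\theta=\pi/k$ with $k\ge2$ (reflex angles are impossible, since they would make $u$ both positive and negative near $L$). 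In particular $\Omega$ has no reflex codimension-two faces and is convex, and every dihedral angle is a submultiple of $\pi$. Finally, a bounded convex polytope all of whose dihedral angles have the form $\pi/k$ is a Coxeter polytope, so by the Poincar\'e polyhedron theorem (equivalently, Vinberg's theory of discrete reflection groups) the group generated by its facet reflections is discrete with $\Omega$ a fundamental chamber, its translates tile $\R^n$, and the associated hyperplane arrangement meets no chamber's interior; comparing with Definition \ref{defst}, $\Omega$ strictly tessellates $\R^n$, completing the cycle.

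The main obstacle is this local-to-global step inside $(1)\Rightarrow(2)$: the anti-invariance $u\circ R=-u$ is comparatively soft, but forcing $\theta=\pi/k$ at \emph{every} codimension-two face requires a careful model of the eigenfunction near an edge that is valid in all dimensions, and upgrading ``all dihedral angles are $\pi/k$'' to ``discrete reflection group with compact alcove tiling $\R^n$'' is exactly where the classification and structure theory of Euclidean Coxeter groups has to be invoked. One should also take mild care with the spectral theory on polytopes (discreteness of the spectrum and simplicity and positivity of the ground state) for possibly non-Lipschitz $\Omega$, although convexity — which the argument ultimately establishes — makes this routine a posteriori.
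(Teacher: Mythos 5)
Your proposal runs the same cycle as the paper ($1\Rightarrow2\Rightarrow3\Rightarrow1$, with $3\Rightarrow1$ via B\'erard exactly as in the paper), but the two nontrivial implications are argued by genuinely different means. For $2\Rightarrow3$ you identify the reflected copies with the chambers of the hyperplane arrangement and then cite the structure theory of discrete, cocompact Euclidean reflection groups (Bourbaki) to conclude ``crystallographic Coxeter group with alcove $\Omega$''; the paper instead constructs a root system by hand from the tessellation (choosing the lengths of the normals so that the parallel mirror families sit at integer levels) and verifies the closure and integrality axioms directly, which is where the ``crystallographic'' content is actually earned. For $1\Rightarrow2$ you prove anti-symmetry across facet hyperplanes (your power-series argument is a fine substitute for the paper's Cauchy-data uniqueness), then classify dihedral angles at codimension-two faces and invoke the Poincar\'e polyhedron theorem/Vinberg theory to obtain the tiling; the paper avoids all of this by propagating the positivity of $u_1$ through successive reflections: since $u_1$ vanishes on every facet hyperplane and is nonzero inside each reflected copy, no mirror can cut any copy, and the strict tessellation is built directly. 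Your route is legitimate but outsources the decisive steps to heavy external theorems where the paper is short and self-contained.

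The one genuine gap is your convexity claim in $1\Rightarrow2$: ``no reflex codimension-two faces, hence convex'' is a nontrivial local-to-global statement (a van Heijenoort-type result for polyhedral boundaries), and convexity is needed before you may apply the Poincar\'e/Vinberg machinery; as stated it is asserted, not proved, and with the paper's rather weak definition of polytope it requires care. It is easily repaired with a tool you already have: since $u>0$ in $\Omega$ and $u$ vanishes identically on each facet hyperplane, no facet hyperplane meets $\Omega$, so the connected open set $\Omega$ lies in the intersection $P$ of the corresponding open half-spaces; as $\partial\Omega$ is contained in the union of these hyperplanes, $P$ is the disjoint union of the open sets $\Omega$ and $P\setminus\overline{\Omega}$, and connectedness of $P$ gives $\Omega=P$, which is convex. (This is precisely the observation the paper exploits.) Relatedly, your ``alternation of sign forces $2\pi/\theta$ even'' step is only sketched; the clean argument is that every element $g$ of the dihedral group generated by the two facet reflections satisfies $u\circ g=\pm u$, so $u$ vanishes on all images of the two facet hyperplanes, and unless $\theta=\pi/k$ (including the case $\theta/\pi$ irrational, where these images are dense) one of these mirrors cuts the open wedge where $u>0$, a contradiction. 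With these two points tightened, your proof goes through as a valid alternative to the paper's.
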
 

The three equivalent statements in Theorem \ref{thm1} are respectively analytic, geometric, and algebraic.  Our work therefore reveals an intimate connection between analysis, geometry, and algebra.  Moreover, combining our theorem with B\'erard's Proposition \cite[Proposition 9, p. 181]{berard}, we obtain the following rather remarkable result.

\begin{cor}\label{cor0} 
Assume that $\Omega$ is a polytope in $\R^n$.  If the first eigenfunction for the Laplace eigenvalue equation with the Dirichlet boundary condition extends to a real analytic function on $\R^n$, then it is a trigonometric eigenfunction.  Moreover, \em all \em the eigenfunctions of $\Omega$ are trigonometric.  
\end{cor}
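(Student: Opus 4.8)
The plan is to derive Corollary \ref{cor0} by combining Theorem \ref{thm1} with B\'erard's explicit description of the Dirichlet spectrum of an alcove, so the only genuine content beyond Theorem \ref{thm1} is a little bookkeeping about isometries and finite linear combinations. Suppose the first eigenfunction of $\Omega$ extends to a real analytic function on $\R^n$; this is exactly condition (1) of Theorem \ref{thm1}, so the equivalence $(1)\Leftrightarrow(3)$ furnishes a rigid motion $A\colon \R^n\to\R^n$, $A(\bx)=R\bx+\bv$ with $R\in O(n)$, carrying $\Omega$ onto an alcove $\Omega'$ of a crystallographic Coxeter group $W$. Since $A$ is an isometry, $\Delta$ commutes with the pullback $f\mapsto f\circ A$, and $f\circ A$ vanishes on $\pa\Omega$ precisely when $f$ vanishes on $\pa\Omega'$; hence $u\mapsto u\circ A$ is an eigenvalue-preserving bijection from the Dirichlet eigenfunctions of $\Omega'$ onto those of $\Omega$. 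It also preserves the trigonometric property: if $u(\by)=\sum_j a_j\sin(\bL_j\cdot\by+\phi_j)$ with all $\|\bL_j\|=\sqrt{\lambda}$, then
$$(u\circ A)(\bx)=\sum_j a_j\sin\big((R^{T}\bL_j)\cdot\bx+(\bL_j\cdot\bv+\phi_j)\big),$$
and $\|R^{T}\bL_j\|=\|\bL_j\|=\sqrt{\lambda}$, so $u\circ A$ again has the form in Remark \ref{r:trigef}. Thus it suffices to prove both assertions for the alcove $\Omega'$.

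Next I would recall B\'erard's construction. Write $W=W_0\ltimes\Lambda$, where $W_0$ is the finite reflection group fixing a special vertex of $\Omega'$ and $\Lambda$ is the translation lattice of $W$. B\'erard's Proposition \cite[Proposition 9, p.~181]{berard} produces a complete orthogonal system of Dirichlet eigenfunctions of $\Omega'$: for each covector $\boldsymbol\xi$ in a suitable cone of the relevant dual lattice, the alternating sum
$$u_{\boldsymbol\xi}(\bx)=\sum_{w\in W_0}\det(w)\,e^{\,i\,\langle w\boldsymbol\xi,\,\bx\rangle}$$
is a finite linear combination of exponentials, satisfies $\Delta u_{\boldsymbol\xi}=\|\boldsymbol\xi\|^2 u_{\boldsymbol\xi}$ because $\|w\boldsymbol\xi\|=\|\boldsymbol\xi\|$ for every $w\in W_0$, and vanishes on the walls of $\Omega'$ through the special vertex by the alternating property, while the remaining (affine) wall is handled by the choice of $\boldsymbol\xi$ in the correct lattice. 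Pairing each term with its complex conjugate and using $\cos t=\sin(t+\pi/2)$ as in Remark \ref{r:trigef} exhibits $u_{\boldsymbol\xi}$ as a trigonometric eigenfunction all of whose frequency vectors have norm $\|\boldsymbol\xi\|$. In particular the first eigenfunction of $\Omega'$, which is unique up to a scalar because the first Dirichlet eigenvalue of the connected domain $\Omega'$ is simple \cite{courhil}, coincides with one such $u_{\boldsymbol\xi}$ and is therefore trigonometric; pulling back by $A$ proves the first assertion of the corollary.

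For the ``moreover'' clause I would use completeness of B\'erard's system in $L^2(\Omega')$, which forces the eigenspace of $\Omega'$ for any eigenvalue $\lambda$ to be spanned by those $u_{\boldsymbol\xi}$ with $\|\boldsymbol\xi\|^2=\lambda$. Each such $u_{\boldsymbol\xi}$ is trigonometric with all frequency vectors of norm $\sqrt{\lambda}$, and the definition of a trigonometric eigenfunction permits arbitrary finite linear combinations; hence every eigenfunction of $\Omega'$ is trigonometric, and transporting by $A$ shows the same for $\Omega$.

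I do not expect a serious obstacle here, since the weight of the argument is carried entirely by Theorem \ref{thm1}. The points requiring care are minor: invoking B\'erard's construction with the correct normalization and completeness statement, verifying that being trigonometric is stable under rigid motions and under finite linear combinations inside a fixed eigenspace, and noting that the phrase ``the first eigenfunction'' is meaningful only because $\lambda_1$ is simple on a connected domain. This last point is where I would be most explicit, since without simplicity the hypothesis of the corollary would itself need reinterpretation.
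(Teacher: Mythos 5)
Your proposal is correct and follows essentially the same route as the paper: apply the implication $(1)\Rightarrow(3)$ of Theorem \ref{thm1} to conclude $\Omega$ is (congruent to) an alcove, then invoke B\'erard's Proposition \ref{prop1} to get that all eigenfunctions are trigonometric. The extra material you supply — invariance of the trigonometric property under rigid motions, the explicit Weyl-alternating form of B\'erard's eigenfunctions, completeness forcing each eigenspace to be spanned by them, and simplicity of $\lambda_1$ — is sound bookkeeping that the paper leaves implicit in its citation of Proposition \ref{prop1}.
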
 

\begin{remark}  Every trigonometric eigenfunction satisfies the first condition of Theorem \ref{thm1}.  However, there are many functions which satisfy this condition but are \em not \em trigonometric.  Examples include the eigenfunctions for a disk in $\R^2$ which are products of Bessel functions and trigonometric functions.  There is no contradiction with the above corollary because a disk is not a polygonal domain.  
\end{remark}

\begin{figure}[h]
    \centering
    \scalebox{0.8}{\begin{tikzpicture}
\node[draw,circle,radius=4cm,align=center, text width=2.5cm] 
  {\small $\Omega$ is Strictly tessellating};
\node[yshift=-3cm,xshift=5cm,draw,circle,radius=4cm,align=center, text width=2.5cm] 
  {\small $\Omega$ is an Alcove};
\node[xshift=10cm,draw,circle,radius=4cm,align=center, text width=2.5cm] 
  {\small First eigenfunction analytic.};
\draw[black,<->] (2,0.25) -- (8,0.25);
\node [above] at  (5,0.25)  {$\Longleftarrow$ Proven here};

\draw[black,<->] (1.5,-1) -- (3.5,-2);
\node [above,rotate=-27] at  (2.2,-2.1)  {Proven here$\implies$};

\draw[black,<->] (6.5,-2) -- (8.5,-1);
\node [above, rotate=27] at (10-2,-2) {B\'erard [3] $\implies$};


\end{tikzpicture}}
    \caption{This diagram shows the three statements of Theorem \ref{thm1} and how they were proven.} 
     \label{fig:Holy_Trinity} \end{figure}
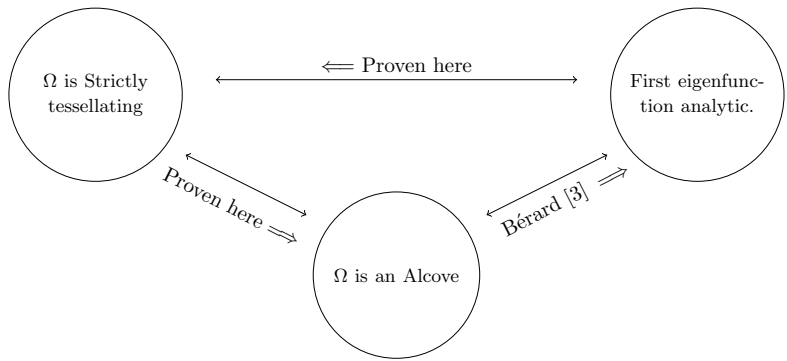

\subsection{Organization} 
In \S \ref{s:st}, we prove that if the first eigenfunction of a polytope satisfies the hypotheses of Theorem \ref{thm1}, then the polytope strictly tessellates $\R^n$.  We prove this by generalizing classical results by Lam\'e \cite{lame}.  In \S \ref{s:proof}, we introduce the notions of root systems and alcoves and prove that all polytopes which strictly tessellate $\R^n$ are alcoves.  We then recall the result of B\'erard \cite{berard}:  all alcoves have a complete set of trigonometric eigenfunctions for the Laplace eigenvalue equation with the Dirichlet boundary condition.  These results together complete the proofs of Theorem \ref{thm1} and Corollary \ref{cor0}.  
In \S \ref{s:fug} we discuss connections to the Fuglede and Goldbach conjectures.  We make our own conjecture and conclude with a purely geometric conjecture which is equivalent to the strong Goldbach conjecture.  


\section{The first eigenfunction and strict tessellation} \label{s:st} 
There is no known method to explicitly compute the eigenfunctions and eigenvalues for an arbitrary polytope.  However, using the tools of functional analysis, one can prove general facts about them.  We summarize briefly here.  For the Dirichlet boundary condition for the Laplace eigenvalue equation on a bounded domain, $\Omega \subset \R^n$, the eigenvalues form a discrete positive set which accumulates only at infinity.  We can therefore order the eigenvalues as they increase 
$$0<\lambda_1 \leq \lambda_2 \ldots \uparrow \infty.$$
We may correspondingly order the eigenfunctions.  In this way, we may speak of the ``first'' eigenfunction, which is the eigenfunction whose eigenvalue is equal to $\lambda_1$.  The eigenfunctions form an orthogonal basis of the Hilbert space $\cL^2(\Omega)$.  We shall require the following well-known fact about the first eigenfunction.  The proof of this theorem can be found in the classical PDE textbook of Evans \cite[\S 6.5]{evans}.   

\begin{theorem} \label{t:lam1} Let $\Omega$ be a bounded domain in $\R^n$.  Then the first eigenfunction for the Laplace eigenvalue equation with the Dirichlet boundary condition does not vanish in $\Omega$.   
\end{theorem}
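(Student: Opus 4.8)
The plan is to combine the variational characterization of the first eigenvalue with the strong maximum principle. First I would recall that $\lambda_1$ is characterized by the Rayleigh quotient,
$$\lambda_1 = \inf\left\{ \frac{\int_\Omega |\nabla u|^2}{\int_\Omega |u|^2} : u \in H_0^1(\Omega),\ u \not\equiv 0 \right\},$$
and that this infimum is attained: a minimizing sequence is bounded in $H_0^1(\Omega)$, hence, after passing to a subsequence, converges weakly in $H_0^1(\Omega)$ and strongly in $L^2(\Omega)$ by the Rellich--Kondrachov compactness theorem, and weak lower semicontinuity of the Dirichlet energy shows the weak limit is a minimizer. A standard computation with the Euler--Lagrange equation then shows that the minimizers are precisely the weak solutions of $\Delta u = \lambda_1 u$ with the Dirichlet boundary condition, i.e.\ the first eigenfunctions.

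The key reduction is the observation that if $u_1$ is a first eigenfunction, then so is $|u_1|$. This uses the classical fact that $u \in H_0^1(\Omega)$ implies $|u| \in H_0^1(\Omega)$ with $|\nabla |u|| = |\nabla u|$ almost everywhere, so that $|u_1|$ and $u_1$ have the same Rayleigh quotient; hence $|u_1|$ is also a minimizer, and therefore also a first eigenfunction. Consequently it suffices to prove that a \emph{nonnegative} first eigenfunction $v := |u_1|$ is strictly positive throughout $\Omega$, since this forces $u_1$ to be nowhere zero in $\Omega$.

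Finally I would invoke interior elliptic regularity and the strong maximum principle. By interior regularity for the equation $\Delta v = \lambda_1 v$, the function $v$ is smooth in $\Omega$ and solves the equation classically; in the sign convention of the paper this reads $\sum_{k=1}^n \partial^2 v / \partial x_k^2 = -\lambda_1 v \leq 0$ since $v \geq 0$, so $v$ is superharmonic in $\Omega$. If $v$ vanished at some interior point $x_0$, then $x_0$ would be an interior minimum of $v$, and the strong minimum principle for superharmonic functions on the connected open set $\Omega$ (equivalently, Harnack's inequality applied to the nonnegative solution $v$) would force $v \equiv 0$, contradicting that $v$ is a nonzero eigenfunction. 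Hence $v > 0$ on $\Omega$, which completes the argument. Every ingredient here --- existence of the minimizer, the behaviour of the absolute value in $H_0^1(\Omega)$, interior regularity, and the strong maximum principle --- is classical and can be found in Evans \cite[\S 6.5]{evans}; the only points requiring genuine care are the bookkeeping of sign conventions and the observation that replacing $u_1$ by $|u_1|$ preserves the eigenfunction property, so that the maximum principle actually applies. The main obstacle is thus essentially expository: assembling these standard facts in the correct order rather than proving anything new.
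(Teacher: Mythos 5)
Your argument is correct, and it is essentially the same as the one the paper relies on: the paper gives no proof of its own but simply cites Evans \cite[\S 6.5]{evans}, where exactly this combination of the Rayleigh quotient characterization, the fact that $|u_1|$ is again a minimizer, elliptic regularity, and the strong maximum principle (or Harnack) is used. Your only extra care---checking the paper's sign convention so that $v=|u_1|$ is superharmonic and using connectedness of the domain---is handled correctly, so there is nothing to fix.
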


The following result is originally due to Lam\'e \cite{lame} in two dimensions and specified to trigonometric eigenfunctions.  Here we generalize the result to $\R^n$ for all $n$ as well as to all real analytic functions.  We include a short proof for completeness.

\begin{lemma}[Vanishing planes]  Let $u$ be a real analytic function on $\R^n$.  Assume that $u$ vanishes on an open, non-empty, subset of a hyperplane, 
$$\cP := \{ \bx \in \R^n : \textbf{M} \cdot \bx = b \}.$$
Then $u$ vanishes on all of $\cP$.   
\end{lemma}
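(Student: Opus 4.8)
The plan is to reduce the statement to a standard fact about real analytic functions, namely that the zero set of a non-trivial real analytic function has empty interior, applied after restricting $u$ to a line inside the hyperplane $\cP$. First I would fix an arbitrary point $\bp \in \cP$ and a direction $\bv \in \R^n$ with $\bM \cdot \bv = 0$, so that the affine line $t \mapsto \bp + t\bv$ lies entirely in $\cP$. The composition $g(t) := u(\bp + t\bv)$ is a real analytic function of one real variable, since $u$ is real analytic on all of $\R^n$ and $t \mapsto \bp + t\bv$ is affine (hence real analytic).

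Next I would use the hypothesis: $u$ vanishes on some open non-empty subset $U$ of $\cP$ (open in the subspace topology of $\cP \cong \R^{n-1}$). I want to show $g \equiv 0$. Pick a point $\bp_0 \in U$ and observe that for directions $\bv$ parallel to $\cP$ and $t$ in a small interval around $0$, the point $\bp_0 + t\bv$ stays in $U$, so $g(t) = 0$ there. A real analytic function of one variable that vanishes on an open interval vanishes identically (all its Taylor coefficients at an interior point of that interval are zero, and analyticity propagates this along all of $\R$). Hence $u(\bp_0 + t\bv) = 0$ for all $t \in \R$ and all $\bv$ with $\bM \cdot \bv = 0$.

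This already shows $u$ vanishes on the linear span through $\bp_0$ of all such directions, which is exactly $\cP$: every point $\bq \in \cP$ can be written as $\bp_0 + (\bq - \bp_0)$ with $\bq - \bp_0$ satisfying $\bM \cdot (\bq - \bp_0) = b - b = 0$, so taking $\bv = \bq - \bp_0$ and $t = 1$ gives $u(\bq) = 0$. Therefore $u$ vanishes on all of $\cP$, as claimed.

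The only subtle point — really the sole ingredient beyond bookkeeping — is the one-variable propagation: a real analytic function on $\R$ vanishing on a nonempty open interval is identically zero. This is the identity theorem for real analytic functions and follows because the set where all derivatives vanish is both open (by the local power series expansion) and closed (by continuity of each derivative), hence all of $\R$ by connectedness. I do not expect any genuine obstacle here; the argument is a routine application of analyticity, and the main thing to be careful about is phrasing the reduction so that the open set $U \subset \cP$ is genuinely used to kill an open interval's worth of values of $g$.
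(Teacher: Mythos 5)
Your proof is correct and follows essentially the same route as the paper: both restrict $u$ to a line segment lying in $\cP$ joining a point of the open vanishing set to an arbitrary point of $\cP$, observe that the restriction is a one-variable real analytic function vanishing on an interval, and invoke the identity theorem to propagate the vanishing to the arbitrary point. The only cosmetic difference is your choice of base point and parametrization ($\bp_0 + t\bv$ versus the paper's $t\bx + (1-t)\by$), which changes nothing substantive.
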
 
\begin{proof}
Let $\by$ be any point of $\cP$.  Let $\bx$ be a point in the interior of the open neighborhood on which $u$ vanishes.  We parametrize a line segment to join the points $\bx$ and $\by$, defining 
$$l(t) := t\bx + (1-t) \by, \quad 0 \leq t \leq 1.$$
We note that 
$$\bM \cdot l(t) = t \bM \cdot \bx + (1-t) \bM \cdot \by \equiv b, \quad \forall t \in [0,1].$$
Consequently this line segment is contained entirely in $\cP$.  Now we consider the function 
$$u(t) := u(l(t)).$$
Since $u$ is a real analytic function, $u(t)$ is also a real analytic function of $t$.  Since $\bx$ is in the interior of the open neighborhood on which $u$ vanishes, and this neighborhood is contained in $\cP$, there is $\eps > 0$ such that $l(t)$ is contained in this neighborhood for all $t \in [1-\eps, 1]$.  The function $u(t)$ vanishes identically on $[1-\eps, 1]$.  Consequently it vanishes identically for all $t$, and in particular 
$$u(l(0)) = u(\by) = 0.$$
Since $\by \in \cP$ was arbitrary, we obtain that $u$ vanishes at every point of $\cP$.  
\end{proof} 

We will also generalize Lam\'e's Fundamental Theorem, which was originally proven in two dimensions and for trigonometric functions, to $n$ dimensions and real analytic eigenfunctions. 

\begin{theorem}[Lam\'e's Fundamental Theorem] Assume that $u$ is a real analytic function on $\R^n$ which satisfies the Laplace eigenvalue equation with the Dirichlet boundary condition on a polytope $\Omega \in \wp_n$.  Then $u$ is anti-symmetric with respect to all $(n-1)$ dimensional hyperplanes on which $u$ vanishes.
\end{theorem}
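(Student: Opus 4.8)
The plan is to reduce the statement to the local behaviour of $u$ near a vanishing hyperplane and then use the real analyticity of $u$ together with the reflection invariance of the Laplacian. Let $\cP = \{\bx : \bM\cdot\bx = b\}$ be an $(n-1)$-dimensional hyperplane on which $u$ vanishes (by the Vanishing Planes Lemma, it suffices that $u$ vanish on a non-empty open subset of $\cP$, which is exactly what the Dirichlet condition on a face of $\partial\Omega$ provides). Let $R$ denote the orthogonal reflection of $\R^n$ across $\cP$. The goal is to show $u\circ R = -u$ on all of $\R^n$.

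First I would set up coordinates adapted to $\cP$: after an isometry we may assume $\cP = \{x_n = 0\}$, so that $R(x_1,\dots,x_n) = (x_1,\dots,x_{n-1},-x_n)$ and $u$ vanishes on an open subset $U \subset \{x_n = 0\}$. Define $v := u + u\circ R$; this is a real analytic function on $\R^n$, it is symmetric under $R$, and $\Delta v = \lambda v$ since $\Delta$ commutes with the isometry $R$ (the Laplacian is isometry-invariant). Now I claim $v$ vanishes to infinite order along $U$. Since $u|_U = 0$, also $(u\circ R)|_U = 0$ because $R$ fixes $U$ pointwise, so $v|_U = 0$. For the normal derivatives: because $v$ is $R$-symmetric, $v(x',x_n) = v(x',-x_n)$, hence all odd-order $\partial_{x_n}$-derivatives of $v$ vanish on $\{x_n=0\}$, in particular on $U$. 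The even-order normal derivatives require an argument: since $v|_{\{x_n=0\}}$ is real analytic in $x'$ and vanishes on the open set $U$, all tangential derivatives $\partial_{x'}^\alpha v$ vanish on $U$ as well; then the eigenvalue equation $\partial_{x_n}^2 v = -\Delta_{x'} v - \lambda v$ lets me express every even normal derivative of $v$ on $\{x_n=0\}$ in terms of tangential derivatives of lower-order data, so by induction all of these also vanish on $U$. Therefore every partial derivative of $v$ of every order vanishes at every point of $U$.

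Next I would invoke the identity theorem for real analytic functions: a real analytic function on the connected set $\R^n$ whose full Taylor expansion vanishes at a single point must be identically zero. Since $v$ and all its derivatives vanish at any point of $U$, we conclude $v \equiv 0$ on $\R^n$, i.e. $u\circ R = -u$ everywhere, which is exactly the asserted anti-symmetry of $u$ with respect to $\cP$. Running this argument for each of the (finitely many) hyperplanes containing a face of $\partial\Omega$ on which $u$ vanishes — equivalently, by the Vanishing Planes Lemma, any hyperplane on which $u$ vanishes — yields the theorem.

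**The main obstacle** I anticipate is the bookkeeping in the infinite-order vanishing step, specifically handling the even-order normal derivatives cleanly. The cleanest route is the inductive scheme using the PDE to trade two normal derivatives for a Laplacian in the tangential variables plus a zeroth-order term, combined with the observation that all purely tangential derivatives of $v$ vanish on the open set $U \subset \{x_n = 0\}$; one should be slightly careful to organize the induction on total derivative order so that at each stage the quantities being reduced are of strictly lower order. An alternative that sidesteps some of this is to work directly with $w := u - u\circ R$, show it is $R$-antisymmetric hence automatically vanishes on $\{x_n=0\}$ with vanishing even normal derivatives there, and prove instead that $w$ vanishes on a neighbourhood — but this still needs the PDE to kill the odd normal derivatives, so the bookkeeping is comparable. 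Everything else (isometry-invariance of $\Delta$, the identity theorem for real analytic functions, the reduction to a single face via the Vanishing Planes Lemma) is standard or already established in the excerpt.
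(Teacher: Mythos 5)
Your overall strategy works, but one step you assert is not justified as written and must be added: before you can use $\Delta v=\lambda v$ at and near points of $U$, you need $u$ itself to satisfy the eigenvalue equation on a full neighborhood of the face, not merely on $\Omega$. As you define it, $v=u+u\circ R$ is known to satisfy the equation only on $\Omega\cap R(\Omega)$, which does not contain $U\subset\partial\Omega$ (it could even be empty), yet your induction for the even-order normal derivatives differentiates the identity expressing $\partial_{x_n}^2 v$ through tangential second derivatives and $\lambda v$ precisely at points of $U$. The repair is exactly the paper's first step, and it uses the same tool you already invoke: since $u$ is real analytic on $\R^n$, the function $\Delta u-\lambda u$ is real analytic and vanishes on the open set $\Omega$, hence vanishes identically by the identity theorem, so $u$, then $u\circ R$, then $v$ satisfy the eigenvalue equation on all of $\R^n$. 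With that line inserted, your infinite-order vanishing argument is sound: $v=0$ on $U$, odd normal derivatives vanish by $R$-symmetry, purely tangential derivatives vanish because $U$ is open in the hyperplane, and the equation inductively kills the even normal derivatives, so the identity theorem gives $v\equiv 0$, i.e.\ $u\circ R=-u$.

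Granting that repair, your route differs from the paper's at the uniqueness stage. The paper, after extending the equation to $\R^n$ as above, forms the odd reflection $\widetilde u$ of $u$ across the hyperplane $H$ and concludes $u=\widetilde u$ by quoting standard uniqueness theory for the Cauchy problem, since $u$ and $\widetilde u$ share Cauchy data (value and normal derivative) along the face. You instead work with the symmetric part $u+u\circ R$, show its full Taylor expansion vanishes at a point of $U$, and use only the identity theorem for real analytic functions. Your version is more self-contained (no appeal to Holmgren/Cauchy--Kovalevskaya-type uniqueness and no case-defined reflected function) at the cost of the derivative bookkeeping you describe; the paper's version is shorter but leans on cited PDE uniqueness results. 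Both arguments apply to any hyperplane on which $u$ vanishes on a relatively open piece, which is what the theorem requires.
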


\begin{proof} 
Let $\lambda$ be the eigenvalue corresponding to $u$, so that on $\Omega$ we have 
$$\Delta u(\bx) = \lambda u(\bx) \quad \forall \bx \in \Omega.$$
Then, since $u$ is real analytic, $\Delta u$ is also real analytic on $\Omega$.  The function 
$$\Delta u - \lambda u$$
is real analytic and vanishes on $\Omega$ which is an open subset of $\R^n$.  Consequently, it vanishes on all of $\R^n$, and therefore $u$ satisfies the same Laplace eigenvalue equation on all of $\R^n$.  

Now, let $H$ be an $(n-1)$ dimensional hyperplane on which $u$ vanishes.  We consider coordinates 
$$\by = (r, \bz) \in \R^n,$$
where $\bz \in \R^{n-1}$ and 
$$r=0 \iff \by \in H.$$
Let us now define the function 
$$\widetilde u(r, \bz) := \begin{cases} u(r, \bz) & (r, \bz) \in \overline \Omega \\ - u(-r, \bz) & (r, \bz) \not \in \overline \Omega. \end{cases}$$
With this definition, $\widetilde u$ is anti-symmetric with respect to $H$.  Since $u$ satisfies the Laplace eigenvalue equation on $\R^n$ and is real analytic, the same is true for $\widetilde u$.  Moreover, on $\pa \Omega$, $u$ and $\widetilde u$ have the same Cauchy data; both functions vanish and have the same normal derivative.  Consequently, by standard uniqueness theory of partial differential equations \cite{courhil}, \cite{evans}, $u = \widetilde u$.  Therefore $u$ is also anti-symmetric with respect to $H$.   
\end{proof} 

We are now poised to prove the first implication in Theorem \ref{thm1}.  

\begin{prop} \label{prop1thm1} Assume that $\Omega$ is a polytope in $\R^n$, and the first eigenfunction satisfies the first condition of Theorem \ref{thm1}.  Then $\Omega$ strictly tessellates $\R^n$.
\end{prop}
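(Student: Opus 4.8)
The plan is to use the first eigenfunction $u$ itself as a ``certificate'' for the tessellation: it is nonzero exactly on the tiles and zero exactly on the walls. First I would record that, by Theorem \ref{t:lam1}, $u \ne 0$ throughout $\Omega$, while the Dirichlet condition gives $u \equiv 0$ on $\pa\Omega = \bigcup_j \overline{P_j}$. Each facet $P_j$ is a nonempty relatively open subset of its hyperplane $H_j$, so the Vanishing planes lemma upgrades this to $u \equiv 0$ on all of $H_j$. Hence $H_j \cap \Omega = \emptyset$ (otherwise $u$ would vanish at an interior point of $\Omega$), i.e.\ $\Omega$ lies strictly on one open side of each of its facet hyperplanes. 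Finally, Lam\'e's Fundamental Theorem says $u$ is anti-symmetric about each $H_j$: writing $s_j$ for the reflection in $H_j$, we have $u \circ s_j = -u$.

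Next I would iterate this. Set $\Omega_j := s_j(\Omega)$; since $u\circ s_j = -u$, the function $u$ vanishes on $\pa\Omega_j = s_j(\pa\Omega)$ and is nonvanishing on $\Omega_j$, so $u$ is again a Dirichlet eigenfunction for the same $\lambda$ on the polytope $\Omega_j$, and $\overline\Omega\cap\overline{\Omega_j}=\overline{P_j}$. Calling a polytope a \emph{copy} if it is obtained from $\Omega$ by finitely many reflections in facet hyperplanes, an easy induction (reapplying the Vanishing planes lemma and Lam\'e's Fundamental Theorem at each stage) shows: every copy $\Omega'$ is a polytope on which $u$ is a Dirichlet eigenfunction nonvanishing on its interior, $u\equiv 0$ on every facet hyperplane of $\Omega'$, and $\Omega'$ lies strictly on one side of each of its facet hyperplanes. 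Now let $U := \{\bx : u(\bx)\ne 0\}$. Each copy $\Omega'$ is open, connected, nonempty, contained in $U$, and has topological boundary inside $\R^n\setminus U$, hence is clopen in $U$ and is therefore exactly one connected component of $U$. Consequently any two copies are identical or disjoint; since every copy has volume $\mathrm{vol}(\Omega)>0$, every bounded region meets only finitely many distinct copies, so the family of copies is locally finite (in particular countable).

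It then remains to show the copies cover $\R^n$. Let $W$ be the union of the closures of all copies. By local finiteness $W$ is closed, and $W\supseteq\overline\Omega$ has nonempty interior. Suppose $V := \R^n\setminus W \ne\emptyset$; it is open, and $\pa W=\pa V$. A point $q\in\pa W$ lies in $\overline{\Omega'}\setminus\Omega'$ for some copy $\Omega'$; if $q$ were in the relative interior of a facet $P$ of $\Omega'$, then near $q$ the set $\overline{\Omega'}$ is a closed half-space bounded by $H_P$ and the reflected copy $s_{H_P}(\Omega')$ fills the complementary half-space, so a full neighborhood of $q$ lies in $W$, contradicting $q\in\pa W$. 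Hence $\pa W$ is contained in the union over all copies of the relative boundaries of their facets, a locally finite union of sets of affine codimension $\ge 2$, so $\pa W$ has topological dimension $\le n-2$. For $n\ge 2$ the complement of such a closed set in $\R^n$ is connected, yet $\R^n\setminus\pa W = \mathrm{int}\,W \sqcup V$ would be a disjoint union of two nonempty open sets — a contradiction. (The case $n=1$ is the elementary tessellation of $\R$ recorded after Definition \ref{defst}.) Thus $W=\R^n$. Strictness is now immediate: every facet hyperplane of every copy is one of the hyperplanes on which $u$ vanishes, while $u\ne 0$ on the interior of each copy, so no such hyperplane meets the interior of any copy; enumerating the countably many distinct copies as $\{\Omega_j\}_{j\in\Z}$ gives exactly Definition \ref{defst}.

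The analytic ingredients — Vanishing planes and Lam\'e's Fundamental Theorem — do the work almost for free; I expect the genuinely delicate point to be the ``no gaps'' step, i.e.\ that the reflected copies leave no uncovered region. The subtlety is that \emph{a priori} the reflection group generated by the facet hyperplanes need not be discrete, so one cannot simply invoke a known tessellation or a fundamental-domain argument. The route above sidesteps this by using only two robust facts proved along the way: the one-sidedness of each copy relative to each of its facet hyperplanes, and the local finiteness forced by the volume bound. Together these pin down the local structure of $\pa W$ well enough to reduce ``no gaps'' to the elementary fact that $\R^n$ minus a closed set of codimension at least two stays connected.
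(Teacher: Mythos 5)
Your proposal is correct, and its analytic core is exactly the paper's: Theorem \ref{t:lam1} (non-vanishing of the first eigenfunction), the Vanishing Planes Lemma, and Lam\'e's Fundamental Theorem, applied first to $\Omega$ and then iteratively to each reflected copy. Where you genuinely diverge is in the global step. The paper, after establishing that each facet hyperplane misses the interior of each reflected copy, simply says ``repeating indefinitely, we obtain a strict tessellation,'' leaning on the earlier informal Remark that reflections always cover $\R^n$; it does not spell out why the copies neither overlap nor leave gaps. You supply that missing bookkeeping: identifying each copy with a connected component of $\{u\neq 0\}$ forces distinct copies to be disjoint, the volume bound gives local finiteness and countability, and the codimension-two analysis of $\pa W$ together with the fact that a closed set of topological dimension $\le n-2$ cannot disconnect $\R^n$ rules out gaps. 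This buys a complete argument at the price of some point-set topology; the paper buys brevity at the price of leaving the ``no gaps, no overlaps'' step essentially asserted.

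One point you should make explicit: the claim that near a point $q$ in the relative interior of a facet $P$ of a copy $\Omega'$, the closure $\overline{\Omega'}$ is locally the closed half-space bounded by $H_P$. Under the paper's rather weak definition of polytope this is false in general (a slit domain is a ``polytope'' in that sense), so it must be derived from what you have proved. It does follow: since $u\equiv 0$ on every facet hyperplane of $\Omega'$ and $u\neq 0$ on $\Omega'$, the copy lies in the open half-space determined by each of its facet hyperplanes, and a segment argument then shows $\Omega'$ equals the intersection of those open half-spaces (any segment from a point of $\Omega'$ to a point of the intersection stays in the intersection, hence cannot meet $\pa\Omega'\subset\bigcup_j H_j$). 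In particular $\Omega'$ is convex, $q$ lies on no facet hyperplane other than $H_P$, and the local half-space picture — and hence your contradiction that a full neighborhood of $q$ lies in $W$ — follows. With that one- or two-line addition the argument is complete.
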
 

\begin{proof}
Let $\Omega$ be a polytope in $\R^n$ as in the statement of the proposition.  If $n=1$, then $\Omega$ is a segment and may be written as $(a,b)$ for some real numbers $a<b$.  We have computed the eigenfunctions explicitly in this case.  They are 
$$u_k (x) = \sin \left( \frac{x-a}{b-a} k \pi \right).$$
The first eigenfunction in particular satisfies the hypotheses of Theorem \ref{thm1}, and we have also shown that all one dimensional polytopes strictly tessellate $\R^1$.  Hence the proposition is proven in one dimension.  So let us assume that $n \geq 2$.  By the Vanishing Planes Lemma, for an affine subset $\cP$ which contains a boundary face of $\Omega$, all eigenfunctions of $\Omega$ vanish on $\cP$.  Since the first eigenfunction never vanishes in the interior of $\Omega$ by Theorem \ref{t:lam1}, it follows that all of the hyperplanes which contain the boundary faces of $\Omega$ have empty intersection with the interior of $\Omega$.  

Near a boundary face of $\Omega$ which is contained in the hyperplane $\cP$, we use the coordinates $(r, \bz)$ as in the previous Theorem.   Without loss of generality, we may assume that for $r>0$ sufficiently small, there are points $(r,\bz)$ inside $\Omega$.  Since $\Omega$ is a domain, it is an open set.  By Lam\'e's Fundamental Theorem, $u_1(-r, \bz) = -u_1(r,\bz)$.  Consequently, reflecting $\Omega$ across $\cP$, we see that $u_1$ is also an eigenfunction in this reflected copy of $\Omega$ which satisfies the Dirichlet boundary condition.  Since the first eigenfunction does not vanish on the interior of $\Omega$, it also does not vanish on the interior of the reflection of $\Omega$ across $\cP$.  This shows that every hyperplane which contains a boundary face of $\Omega$ has empty intersection with this reflected copy of $\Omega$.  We repeat this argument, reflecting across all of the boundary faces of $\Omega$ and then reflecting across the boundary faces of the reflected copies of $\Omega$. This shows that the hyperplanes which contain the boundary faces of the reflected copies of $\Omega$ must always have empty intersection with the interior of all reflected copies of $\Omega$.  Consequently, repeating indefinitely, we obtain a strict tessellation of $\R^n$ by $\Omega$.   
\end{proof}

\section{Root systems, alcoves, and strictly tessellating polytopes} \label{s:proof} 
In 1980, Pierre B\'erard showed that a certain type of bounded domain in $\R^n$, known as an \em alcove, \em always has a complete set of trigonometric eigenfunctions for the Laplace eigenvalue equation with the Dirichlet boundary condition.  To define alcoves, we must first define root systems.  The concept of a root system was originally introduced by Wilhelm Killing in 1888 \cite{killing}.  His motivation was to classify all simple Lie algebras over the field of complex numbers.  In this section, we will see how our analytic problem, the study of the Laplace eigenvalue equation, is connected to these abstract algebraic concepts from Lie theory and representation theory.  

\begin{definition}\label{def:rootsystem} 
A \em root system \em in $\R^n$ is a finite set $R$ of vectors which satisfy:
\begin{enumerate} 
\item $0$ is not in $R$.
\item The vectors in $R$ span $\R^n$.
\item The only scalar multiples of $\bv \in R$ are $\pm \bv$.  
\item $R$ is closed with respect to reflection across any hyperplane whose normal is an element of $R$, that is 
$$\bv - 2 \frac{ \bu \cdot \bv}{||\bu||^2} \bu \in R, \quad \forall \bu, \bv \in R;$$
\item If $\bu, \bv \in R$ then the projection of $\bu$ onto the line through $\bv$ is an integer or half-integer multiple of $\bv$.  The mathematical formulation of this is that  
$$2 \frac{ \bu \cdot \bv}{||\bv||^2} \in \Z, \quad \forall \bu, \bv \in R.$$
\end{enumerate}
The elements of a root system are often referred to as \em roots.  \em 
\end{definition} 

\begin{remark}  There are different variations of the definition given above for a root system depending on the context.  Sometimes only conditions 1--4 are used to define a root system.  When the additional assumption (5) is included then the root system is said to be \em crystallographic.  \em  In other contexts, condition 3 is omitted, and they would call a root system which satisfies condition 3 \em reduced.  \em  
\end{remark}

We will need the dual root system to define the eigenvalues of the polytope which will be naturally associated to the root system. 

\begin{definition} \label{def:rootinverse} 
Let $R$ be a root system.  Then for $\bv \in R$ the \em coroot \em $\bv^\vee$ is defined to be 
$$\bv^\vee = \frac{2}{||\bv||^2} \bv.$$
The set of coroots $R^\vee := \{ \bv^\vee \}_{\bv \in R}$.  This is called the \em dual root system, \em and may also be called the inverse root system.  The dual root system is itself a root system.  
\end{definition}

We associate a Weyl group to a root system.  These Weyl groups are subgroups of the orthogonal group $O(n)$.

\begin{definition} \label{def:weylgroup}
For any root system $R\subset \R^n$ we associate a subgroup of the orthogonal group $O(n)$ known as its \em Weyl group. \em  This is the subgroup $W < O(n)$ generated by the set of reflections by hyperplanes whose normal vectors are elements of $R$.   For $\bv \in R$ reflection across the hyperplane with normal vector equal to $\bv$ is explicitly 
$$\sigma_\bv : \R^n \to \R^n, \quad \sigma_\bv (\bx) = \bx - 2 \frac{(\bv\cdot \bx)}{||\bv||^2} \bv.$$
\end{definition} 

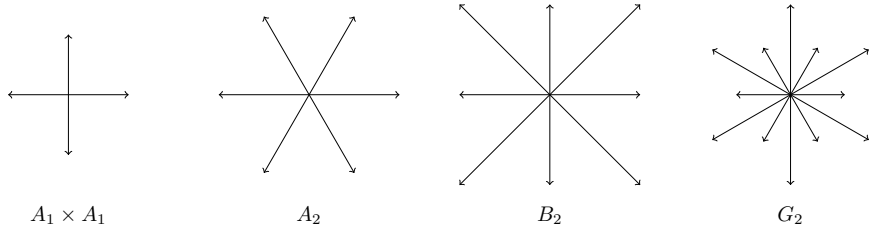
\begin{figure}[h]
    \centering
    \scalebox{0.8}{\begin{tikzpicture}
\begin{scope}
    \draw[black,<->] (-1,0) -- (1,0);
    \draw[black,<->] (0,-1) -- (0,1);
    \node at (0,-2) {$A_1 \times A_1$};
\end{scope}

\begin{scope}[shift={(4,0)}]
    \draw[black,<->] (-1.5,0) -- (1.5,0);
    \draw[black,<->] (-0.75,-1.299038) -- (0.75,1.299038);
    \draw[black,<->] (0.75,-1.299038) -- (-0.75,1.299038);
    \node at (0,-2) {$A_2$};
\end{scope}

\begin{scope}[shift={(8,0)}]
    \draw[black,<->] (-1.5,0) -- (1.5,0);
    \draw[black,<->] (0,-1.5) -- (0,1.5);
    \draw[black,<->] (-1.5,-1.5) -- (1.5,1.5);
    \draw[black,<->] (1.5,-1.5) -- (-1.5,1.5);
    \node at (0,-2) {$B_2$};
\end{scope}

\begin{scope}[shift={(12,0)}]
    \begin{scope}[rotate=-30]
        \draw[black,<->] (-1.5,0) -- (1.5,0);
        \draw[black,<->] (-0.75,-1.299038) -- (0.75,1.299038);
        \draw[black,<->] (0.75,-1.299038) -- (-0.75,1.299038);
    \end{scope}
     \begin{scope}[scale=0.6]
        \draw[black,<->] (-1.5,0) -- (1.5,0);
        \draw[black,<->] (-0.75,-1.299038) -- (0.75,1.299038);
        \draw[black,<->] (0.75,-1.299038) -- (-0.75,1.299038);
    \end{scope}
    \node at (0,-2) {$G_2$};
\end{scope}

\end{tikzpicture}}
    \caption{Here are four root systems in $\mathbb{R}^2$.  Below each root system is the name of its Weyl group. The name of the Weyl group may also be used as the name of the root system.}
    \label{fig:rotsystem}
\end{figure}

By the definition of a root system, the associated Weyl group is finite.  To explain what was proven in \cite{berard} by B\'erard, we require the notion of Weyl chamber.

\begin{definition} \label{def:chamber} 
For a root system $R \subset \R^n$ for each $\bv \in R$, let $H_\bv$ denote the hyperplane which contains the origin and whose normal vector is $\bv$.  In particular 
$$H_\bv := \{ \bx \in \R^n : \bx \cdot \bv = 0 \}.$$
Let $\mathcal H = \{ H_\bv \}_{\bv \in R}$.  Then $\R^n \setminus \mathcal H$ is disconnected, and each connected open component is known as a \em Weyl chamber. \em 
\end{definition} 

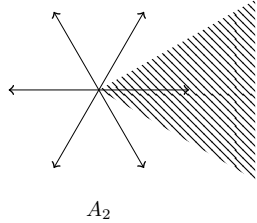
\begin{figure}[h]
    \centering
    \scalebox{0.8}{\begin{tikzpicture}
 \usetikzlibrary{patterns}
    \begin{scope}[rotate=-30]
    \fill[pattern=north west lines]    (0,0) -- (0.75*2,1.299038*2) --(1.5*2,0);
    \end{scope}
     
    \draw[black,<->] (-1.5,0) -- (1.5,0);
    \draw[black,<->] (-0.75,-1.299038) -- (0.75,1.299038);
    \draw[black,<->] (0.75,-1.299038) -- (-0.75,1.299038);

    \node at (0,-2) {$A_2$};
\end{tikzpicture}}
    \caption{Extending the shaded area to infinity it is a Weyl chamber of the Weyl group $A_2$.}
    \label{fig:Weyl_chamb}
\end{figure}

To obtain an alcove, we not only wish to consider reflecting across the hyperplanes $H_v$, but also across a discrete set of parallel translations of these hyperplanes.  

\begin{figure}[h]
   \centering
    \scalebox{0.8}{\begin{tikzpicture}
\begin{scope}[scale = 1.2]
    \begin{scope}
        \draw[black,<->] (-1.5,0) -- (1.5,0);
        \draw[black,<->] (0,-1.5) -- (0,1.5);
        \draw[black,<->] (-1.5,-1.5) -- (1.5,1.5);
        \draw[black,<->] (1.5,-1.5) -- (-1.5,1.5);
        \draw[gray,dashed] (1.5,3) -- (1.5,-2);
        \draw[gray,dashed] (-3,1.5) -- (3,1.5);
        \node at (0,-3) {$B_2$};
    \end{scope}
    \begin{scope}[scale=0.707, rotate=45]
        \draw[gray,dashed] (1.5,3) -- (1.5,-3);
    \end{scope}
    \begin{scope}[scale=0.707, rotate=45]
        \draw[gray,dashed] (3,3) -- (3,-3);
    \end{scope}
    \begin{scope}[scale=0.707, rotate=-45]
        \draw[gray,dashed] (1.5,3) -- (1.5,-3);
    \end{scope}
    
    \begin{scope}[shift={(1.5,0)},scale=0.707,rotate=135]
        \filldraw[gray!20] (0,0) -- (1.5,0) -- (1.5,1.5) -- cycle;
    \end{scope}
    \node at (2,2.8) {$H_{\alpha,k}$};
    \node at (0.75,0.3) {$A$};
    \node at (1.9,0) {$\alpha$};
\end{scope}
\end{tikzpicture}}
    \caption{This shows an alcove, $A$, corresponding to the root system with Weyl group $B_2$. For $\alpha \in B_2$, the hyperplanes $H_{\alpha,k}$ for $k \in \Z$ are the parallel hyperplanes which have normal vector equal to $\alpha$. Note that $A$ is an isosceles right triangle.}
    \label{fig:alcove}
    \end{figure}
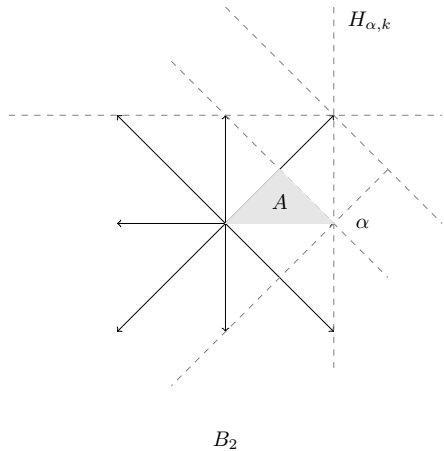

\begin{definition} \label{def:alcove} Let $R$ be a root system.  Denote by $H_\bv$ the hyperplane in $\R^n$ which contains the origin and whose normal vector is equal to $\bv$ for $\bv \in R$.  Let 
$$H_{\bv,k} = \{ \bx \in \R^n : \bv \cdot \bx = k\},$$
for $k \in \Z$.  Then $H_{\bv,0} = H_\bv$.  For $k \neq 0$, the hyperplane $H_{\bv,k}$ is parallel to $H_\bv$.  We define an \em alcove \em to be a connected component of 
$$\R^n \setminus \left\{ \bigcup_{\bv \in R, k \in \Z} H_{\bv,k} \right\}.$$
\end{definition} 

We note that the definition of an alcove immediately implies that it is a polytope in $\R^n$. An example of an alcove is shown in Figure \ref{fig:alcove}.

\begin{prop}[Proposition 9, p. 181 \cite{berard}] \label{prop1} Let $\Omega \subset \R^n$ be an alcove.  Then $\Omega$ has a complete set of trigonometric eigenfunctions for the Laplace eigenvalue equation with the Dirichlet boundary condition.  
\end{prop}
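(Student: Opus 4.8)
The plan is to realize the alcove $\Omega$ as a fundamental domain of an affine reflection group and to produce every eigenfunction by an alternating sum, the classical ``method of images,'' over the associated finite Weyl group. Let $R$ be the root system whose alcove is $\Omega$, let $W$ be its Weyl group, and let $W_{\mathrm{aff}}$ be the group of isometries of $\R^n$ generated by the reflections $\sigma_{\bv,k}$ in all the hyperplanes $H_{\bv,k}$, $\bv\in R$, $k\in\Z$. Since $\sigma_{\bv,k}(\bx)=\sigma_\bv(\bx)+k\bv^\vee$, one sees that $W_{\mathrm{aff}}=W\ltimes L$, where $L:=\sum_{\bv\in R}\Z\,\bv^\vee$ is the coroot lattice of $R$; because $R$ spans $\R^n$ (Definition \ref{def:rootsystem}) and is crystallographic, $L$ is a full-rank lattice and the action of $W_{\mathrm{aff}}$ on $\R^n$ is properly discontinuous and cocompact. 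By the structure theory of affine Weyl groups \cite{bourbaki}, $\overline\Omega$ is a strict fundamental domain for $W_{\mathrm{aff}}$: every orbit meets $\overline\Omega$ and meets $\Omega$ in at most one point. Passing to the flat torus $\TT:=\R^n/L$, the residual group $W_{\mathrm{aff}}/L\cong W$ acts on $\TT$; the quotient map $\R^n\to\TT$ is injective on $\Omega$, and the $W$-translates of $\Omega$ cover $\TT$ up to a set of measure zero, so $\Omega$ serves as a fundamental domain for $W$ acting on $\TT$.

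Next I would construct the eigenfunctions. Let $L^\#$ be the dual lattice of $L$ (the weight lattice of $R$). Then $e_\mu(\bx):=e^{2\pi i\,\mu\cdot\bx}$, $\mu\in L^\#$, is an $L$-periodic function which is a trigonometric eigenfunction of $\Delta$ on all of $\R^n$ with eigenvalue $4\pi^2\,||\mu||^2$, and $\{e_\mu\}_{\mu\in L^\#}$ is an orthogonal basis of $\cL^2(\TT)$. Since $W<O(n)$ preserves both $L^\#$ and the Euclidean norm, the alternating sum
$$f_\mu(\bx):=\sum_{w\in W}(\det w)\,e_{w\mu}(\bx)$$
is again a trigonometric eigenfunction of $\Delta$ with the \emph{same} eigenvalue $4\pi^2\,||\mu||^2$. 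By construction $f_\mu\circ w^{-1}=(\det w)f_\mu$ for all $w\in W$ and $f_\mu$ is $L$-periodic, so $f_\mu$ is anti-invariant under all of $W_{\mathrm{aff}}$; hence it vanishes on every wall $H_{\bv,k}$, in particular on $\partial\Omega$, and $f_\mu|_\Omega$ is a trigonometric Dirichlet eigenfunction on $\Omega$. (If $\mu$ lies on a wall of a Weyl chamber the terms cancel in pairs and $f_\mu\equiv0$; for $\mu$ regular the $|W|$ frequencies $w\mu$ are distinct so $f_\mu\not\equiv0$. If one insists on real eigenfunctions, one passes to real and imaginary parts, which remain anti-invariant trigonometric eigenfunctions.)

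It remains to show that $\{f_\mu|_\Omega\}$ is complete in $\cL^2(\Omega)$. The key tool is the correspondence, an isometry up to the constant $|W|$, between $\cL^2(\Omega)$ and the subspace $\cL^2(\TT)^{-}$ of $W$-anti-invariant functions on $\TT$: because $\Omega$ is a fundamental domain for $W$ on $\TT$, any $g$ on $\Omega$ extends uniquely to an anti-invariant $\tilde g$ on $\TT$ by odd reflection across the walls, and $g\mapsto\tilde g$ is onto $\cL^2(\TT)^{-}$. Expanding $\tilde g=\sum_{\mu\in L^\#}c_\mu e_\mu$ and imposing anti-invariance forces $c_{w\mu}=(\det w)c_\mu$, so $\tilde g$ lies in the closed span of $\{f_\mu\}_{\mu\ \mathrm{regular}}$. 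Consequently, if $g\perp f_\mu$ in $\cL^2(\Omega)$ for every regular $\mu$, then $\tilde g\perp f_\mu$ in $\cL^2(\TT)$ for all such $\mu$, whence $\tilde g=0$ and $g=0$. Since $\Delta$ commutes with the $W$-action it preserves $\cL^2(\TT)^{-}$, and under the same correspondence its Dirichlet eigenfunctions on $\Omega$ match the anti-invariant eigenfunctions on $\TT$; the latter are spanned by the $f_\mu$, which therefore form the desired complete set of trigonometric eigenfunctions.

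I expect the main obstacle to be the interface between the analytic and the combinatorial sides. On the analytic side one must justify that the odd reflection $\tilde g$ of a Dirichlet eigenfunction across a wall is again a genuine (smooth) eigenfunction on $\TT$ --- an instance of Lam\'e's Fundamental Theorem and the reflection principle already developed in \S\ref{s:st}, combined with elliptic regularity. On the combinatorial side one must quote precisely from Bourbaki \cite{bourbaki} that $\overline\Omega$ is a strict fundamental domain for $W_{\mathrm{aff}}$, since the descent to a fundamental domain for $W$ on $\TT$ and the isometry $\cL^2(\Omega)\cong\cL^2(\TT)^{-}$ both rest on it. A minor point, easily dispatched, is the reducible case: if $R$ is an orthogonal union of irreducible subsystems then $\Omega$, $L$, $W$, and the $f_\mu$ all factor as products, reducing everything to the irreducible case (or one simply invokes Bourbaki's statement in full generality).
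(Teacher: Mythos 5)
Your argument is essentially correct, but note that the paper does not prove this proposition at all: it is imported verbatim from B\'erard, and in fact B\'erard proves more (the explicit spectrum $\{4\pi^2\|\bq\|^2:\bq\in\Gamma^*\cap C(R)\}$ with multiplicities). What you have written is in substance a reconstruction of B\'erard's own proof: realize the alcove as a fundamental domain of the affine Weyl group $W\ltimes Q^\vee$, do Fourier analysis on the torus $\TT=\R^n/Q^\vee$ with frequencies in the weight lattice, antisymmetrize over $W$, and identify $\cL^2(\Omega)$ with the $W$-anti-invariant subspace of $\cL^2(\TT)$ by unfolding. The details you give are sound, including the vanishing of $f_\mu$ on every wall $H_{\bv,k}$ via anti-invariance under the affine group and the fact that only regular $\mu$ (one representative per $W$-orbit) contribute. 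One remark: the ``main obstacle'' you flag is avoidable. You do not need to show that the odd reflection of an arbitrary Dirichlet eigenfunction is again an eigenfunction on $\TT$ (so no Lam\'e-type reflection or elliptic regularity is required here). It suffices that (i) each $f_\mu|_\Omega$ is a genuine Dirichlet eigenfunction --- it is smooth on $\overline\Omega$, vanishes on $\pa\Omega$, hence lies in $H^1_0(\Omega)$ and in the domain of the Dirichlet Laplacian --- and (ii) the restrictions span a dense subspace of $\cL^2(\Omega)$, which follows from the purely measure-theoretic unfolding onto $\cL^2(\TT)^-$ (the walls have measure zero, so no regularity of $g$ is needed). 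Given density, abstract spectral theory for the self-adjoint Dirichlet Laplacian with discrete spectrum forces every eigenspace to be spanned by the $f_\mu$ with matching eigenvalue, which yields the complete trigonometric system (and, as a byproduct, that every eigenfunction is trigonometric, the content of Corollary \ref{cor0}).
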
 

For readers who understand French and read \cite{berard}, you may notice that the statement of Proposition \ref{prop1} is not the English translation of  \cite[Proposition 9, p. 181]{berard}.  B\'erard proved a stronger result; he specified the eigenvalues and corresponding eigenfunctions.  To understand what B\'erard proved, let $R$ be a root system.  Let $C(R)$ denote a Weyl chamber, and let $D(R)$ denote an alcove which is contained in the Weyl chamber $C(R)$.  Consider the dual root system $R^\vee$.  The vertices of the closures of the alcoves associated to $R^\vee$ create a lattice.  Let us denote this lattice by $\Gamma$.  The dual lattice 
$$\Gamma^* := \{ \bx \in \R^n : \bx \cdot \bgamma \in \Z, \quad \forall \bgamma \in \Gamma\}.$$
B\'erard referred to the points contained in this dual lattice as `the group of weights of $R$' (le groupe des poids de R) \cite{berard}.  He proved that the eigenvalues for the alcove $D(R)$ are given by 
$$\{ 4 \pi^2 ||\bq||^2 : \bq \in \Gamma^* \cap C(R)\}.$$
The multiplicity of the eigenvalue $\lambda = 4 \pi^2 ||\bq||^2$ is equal to the number of vectors $\bq \in \Gamma^* \cap C(R)$ which satisfy $\lambda = 4 \pi^2 ||\bq||^2$.  The eigenfunctions are certain linear combinations of $e^{2\pi i \bx \cdot w(\bq)}$, where $w(\bq)$ is in the affine Weyl group of $R$.  The affine Weyl group of $R$ is the semi-direct product of the Weyl group and the lattice $\Gamma$.  Combining our Proposition \ref{prop1thm1} with B\'erard's Proposition \ref{prop1} we obtain the following corollary which states that every alcove is a strictly tessellating polytope.

\begin{cor} \label{cor1} Let $\Omega \subset \R^n$ be an alcove.  Then $\Omega$ is a polytope which strictly tessellates $\R^n$.
\end{cor}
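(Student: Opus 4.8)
The plan is to deduce this immediately from the two results already assembled: B\'erard's Proposition~\ref{prop1} and our Proposition~\ref{prop1thm1}. First I would recall that, as noted right after Definition~\ref{def:alcove}, an alcove is by construction a polytope in $\R^n$; so the only content of the statement is the strict tessellation claim.

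Next I would invoke Proposition~\ref{prop1}: since $\Omega$ is an alcove, it possesses a complete set of trigonometric eigenfunctions for the Dirichlet Laplace eigenvalue problem, and these form an orthogonal basis of $\cL^2(\Omega)$. I would then argue that the first eigenfunction $u_1$ is itself trigonometric. The first eigenvalue $\lambda_1$ is simple, so its eigenspace is one dimensional; expanding $u_1$ in the trigonometric eigenbasis and using orthogonality of the eigenspaces belonging to distinct eigenvalues shows that $u_1$ lies in the span of those basis elements whose eigenvalue equals $\lambda_1$. A finite linear combination of trigonometric eigenfunctions all having eigenvalue $\lambda_1$ is again of the form $\sum_j a_j \sin(\bL_j \cdot \bx) + b_j \cos(\bM_j \cdot \bx)$ with $\|\bL_j\|^2 = \|\bM_j\|^2 = \lambda_1$, hence trigonometric. (Alternatively, one can sidestep even this bookkeeping: B\'erard in fact exhibits an explicit trigonometric eigenfunction for every eigenvalue of the alcove, $\lambda_1$ included.)

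Then I would observe that any trigonometric function is real analytic on all of $\R^n$, being a finite sum of linear forms in $\bx$ composed with the entire functions $\sin$ and $\cos$. In particular $u_1$, defined a priori only on $\Omega$, is given on all of $\R^n$ by the same formula, which is real analytic; thus $\Omega$ satisfies condition~(1) of Theorem~\ref{thm1}. Finally I would apply Proposition~\ref{prop1thm1} to conclude that $\Omega$ strictly tessellates $\R^n$, which is the assertion. I do not anticipate a genuine obstacle; the only step requiring a little care is identifying the ground state as trigonometric rather than merely a limit of trigonometric functions, and this is exactly what the simplicity of $\lambda_1$ together with orthogonality of distinct eigenspaces secures, as indicated above.
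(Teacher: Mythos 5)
Your proposal is correct and follows exactly the route the paper itself takes: the paper obtains Corollary~\ref{cor1} by combining B\'erard's Proposition~\ref{prop1} (an alcove has a complete set of trigonometric eigenfunctions, so the first eigenfunction is trigonometric and hence extends real analytically to $\R^n$) with Proposition~\ref{prop1thm1}, noting as you do that an alcove is by definition a polytope. Your extra care in identifying the ground state as trigonometric (via simplicity of $\lambda_1$, or finite-dimensionality of the eigenspace, or B\'erard's explicit eigenfunctions) simply fills in details the paper leaves implicit.
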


In the following proposition we prove the converse: every strictly tessellating polytope is an alcove of a root system.

\begin{prop} \label{prop2thm1} Let $\Omega \subset \R^n$ be a polytope which strictly tessellates $\R^n$.  Then $\Omega$ is an alcove.
\end{prop}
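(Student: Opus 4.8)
The plan is to start from a polytope $\Omega$ that strictly tessellates $\R^n$, extract from the tessellation a collection of hyperplanes, and show these are precisely the reflecting hyperplanes $H_{\bv,k}$ of a crystallographic root system $R$, with $\Omega$ a connected component of their complement. First I would set up notation: let $\{\Omega_j\}_{j\in\Z}$ be the isometric copies of $\Omega$ forming the strict tessellation, each obtained by iterated reflections across boundary faces, and let $\cH$ be the set of all hyperplanes in $\R^n$ that contain some boundary face of some $\Omega_j$. The strict tessellation hypothesis says exactly that no hyperplane in $\cH$ meets the interior of any $\Omega_k$; hence each $\Omega_k$ is contained in a single connected component of $\R^n\setminus\bigcup_{\cP\in\cH}\cP$, and a dimension/face-counting argument (each facet of $\Omega_k$ lies in some $\cP\in\cH$) shows $\Omega_k$ \emph{is} that component. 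So $\Omega$ is a chamber of the hyperplane arrangement $\cH$; what remains is to show $\cH$ has the structure $\{H_{\bv,k}: \bv\in R, k\in\Z\}$ for a crystallographic root system $R$.

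Next I would establish the group-theoretic backbone. Let $\Gamma$ be the group of isometries of $\R^n$ generated by the reflections $s_\cP$ across the hyperplanes $\cP\in\cH$. By construction the tessellation is $\Gamma$-invariant: reflecting any $\Omega_j$ across one of its bounding hyperplanes yields another tile, and by the Vanishing Planes Lemma together with Lam\'e's Fundamental Theorem (applied to the first eigenfunction, which we may invoke since strict tessellation gives back the analytic first eigenfunction via Berard, or more directly by the reflection construction already used in Proposition \ref{prop1thm1}) the whole arrangement $\cH$ is mapped to itself by each $s_\cP$. Thus $\Gamma$ acts on $\cH$, acts simply transitively on the tiles $\{\Omega_j\}$ (transitively because each tile is reached by iterated reflection from $\Omega$; freely/simply because distinct tiles have disjoint interiors and a nontrivial stabilizer of $\Omega$ would be a reflection fixing $\Omega$ setwise, impossible for a chamber), and is discrete and cocompact with fundamental domain $\overline\Omega$. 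In the language of the excerpt, $\Gamma$ is a crystallographic group generated by reflections with $\Omega$ as fundamental domain — i.e. exactly the setup of Bourbaki \cite[VI.25 Prop.\ 9]{berard}'s affine Weyl groups. The finitely many facet hyperplanes of $\Omega$ give a finite set of simple reflections generating $\Gamma$, and $\Gamma$ is an affine reflection group.

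Then I would invoke the classification of affine (crystallographic) Coxeter groups: a discrete cocompact reflection group of $\R^n$ whose fundamental chamber is bounded is the affine Weyl group $W_{\mathrm{aff}}(R)$ of a root system $R$ (Bourbaki, VI.2); its reflecting hyperplanes are precisely the $H_{\bv,k}$, $\bv\in R^\vee$ (up to the normalization convention in Definition \ref{def:alcove}), $k\in\Z$, and its chambers are exactly the alcoves. Matching $\cH$ with this hyperplane set and $\Omega$ with a chamber gives that $\Omega$ is an alcove, completing the proof. The one technical point needing care is the boundedness/irreducibility bookkeeping: I must check that $\Omega$ being a bounded polytope forces the arrangement to be \emph{essential} (the normals to its facets span $\R^n$ — otherwise a chamber would be unbounded in some direction), and handle the reducible case by noting a product of alcoves is again an alcove of the product root system, so without loss of generality $R$ is irreducible.

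\textbf{Main obstacle.} I expect the crux to be rigorously upgrading ``$\Omega$ is a chamber of a $\Gamma$-invariant, locally finite, discrete arrangement with $\Gamma$ a cocompact reflection group'' to ``$\cH$ is the affine Weyl arrangement of a genuine crystallographic root system.'' The reflection-group classification does the heavy lifting, but one must verify the hypotheses of that classification purely from the strict tessellation axiom: local finiteness of $\cH$ (finitely many hyperplanes meet any compact set — this follows from discreteness of $\Gamma$ plus cocompactness, but needs the simple-transitivity established above), the fact that the subgroup generated by reflections in hyperplanes through a fixed point is finite (so the ``crystallographic restriction'' — angles between facet hyperplanes are $\pi/m$ with integer $m$, and the half-integer condition (5) of Definition \ref{def:rootsystem} — actually holds), and that parallelism classes in $\cH$ are evenly spaced (forced by $\Gamma$ containing the translation $s_{\cP'}s_{\cP}$ for parallel $\cP,\cP'\in\cH$, together with discreteness). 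None of these steps is deep, but assembling them into the precise input format of the Bourbaki classification is where the real work lies.
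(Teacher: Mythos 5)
Your route is genuinely different from the paper's. The paper proves the proposition by constructing the root system explicitly: it collects the normal directions of the tessellation's hyperplanes at each vertex, fixes the length of each root $\bv$ so that the hyperplanes parallel to $H_{\bv,0}$ nearest the origin sit at levels $\pm 1$ (equivalently $||\bv||=1/||\by_\bv||$, where $\by_\bv=\bv/||\bv||^2$ is the foot of the perpendicular from the origin to $H_{\bv,1}$), and then verifies by hand, by tracking where reflected hyperplanes and the vectors $\by_\bw$ must land inside the strict tessellation, both closure under the reflections $\sigma_\bv$ and the integrality condition (5) of Definition \ref{def:rootsystem}; the alcove structure of Definition \ref{def:alcove} then falls out of the chosen normalization. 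You instead pass to the group $\Gamma$ generated by the reflections in the arrangement $\cH$, identify the tiles with the chambers of $\cH$ (your open-and-closed argument in the complement of $\bigcup\cH$ is correct and is exactly the role strictness plays), and outsource the rest to the Bourbaki classification of discrete cocompact affine reflection groups. That is a legitimate strategy: it is shorter modulo the citation and places the result in its natural context, whereas the paper's argument is self-contained and proves, rather than imports, the crystallographic structure.

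Two points need repair before this becomes a proof. First, your justification that each reflection $s_\cP$ preserves $\cH$ is circular: Bérard's Proposition \ref{prop1} applies to alcoves, which is the very conclusion being proved, and Proposition \ref{prop1thm1} assumes statement (1) (an analytic first eigenfunction), which is not available in the implication $2\Rightarrow 3$. No eigenfunctions are needed here: by Definition \ref{defst} the neighbor of a tile across a facet is its mirror image, so reflection in a hyperplane of $\cH$ sends the two adjacent tiles to each other and, propagating across adjacencies, sends every tile to a tile; hence it permutes $\cH$. This is also the (rather tersely stated) fact the paper uses. Second, what you label ``bookkeeping'' in your last paragraph --- local finiteness of $\cH$, finiteness of the stabilizer of a point, even spacing of each parallelism class, and above all the integrality that makes the resulting root system crystallographic so that $\cH$ really has the form $\{H_{\bv,k}:\bv\in R,\ k\in\Z\}$ demanded by Definition \ref{def:alcove} --- is precisely the mathematical content of the paper's proof (its computation showing $2\,\bw\cdot\bv/||\bw||^2\in\Z$). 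As written, you assert rather than establish that the hypotheses of the classification hold for the arrangement extracted from a strict tessellation; in a final write-up these verifications, or an exact match to the statements you intend to cite in Bourbaki, must be carried out, since they are where the strict-tessellation hypothesis actually gets used.
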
 

\begin{proof} 
We will build a root system, $R$, using the fact that $\Omega$ strictly tessellates space.   The main idea is to collect all the normal vectors to all faces of $\Omega$ in the tessellation.  Since we may translate the entire tessellation, we will collect these normal directions by translating each vertex to the origin.  We will specify the lengths of the vectors later, using the fact that the tessellation is strict.  

So, begin with a vertex at the origin.  For each of the boundary faces which contains this vertex at the origin, reflection across the boundary face is an element of the orthogonal group $O(n)$.  Since the tessellation is strict, and reflection fixes the origin, after finitely many reflections across the boundary face and its images under reflection, we must return to the original copy of $\Omega$.  We begin to build the root system $R$ by defining it to contain $\pm \bv$, the normal vectors to this boundary face, and to also contain all the images of $\pm \bv$ under the repeated reflection until we return to the original copy of $\Omega$.  The lengths of the vectors will be specified later.  Since this repeated reflection returns to the original copy of $\Omega$ after finitely many reflections, the set of vectors defined in this way is finite.  Repeat this for all the boundary faces which contain the origin, and include any new normal directions $\pm \bu$ in $R$.  Next, consider a different vertex in $\Omega$.  Translate the entire tessellation so that this vertex is at the origin and repeat, including any new normal directions $\pm \bw$ in the set $R$.  Do this for all vertices.  Each time we include at most finitely many vectors.  Since the number of vertices is also finite, the total resulting set of vectors, denoted by $R$ is a finite set of vectors.  We define this set so that if $\bv \in R$ then $-\bv \in R$ as well.  This specifies the directions of all vectors, but not their lengths.  The polytope is a bounded, connected, open set with boundary consisting of flat faces.  The collection of normal vectors to the faces of $\Omega$ therefore span $\R^n$, for if not, $\Omega$ would be contained in a $k$-dimensional hyperplane in $\R^n$ and thus not an open set in $\R^n$.  Since the set $R$ contains all the normal vectors to all boundary faces of $\Omega$, the set $R$ spans $\R^n$.  As we have defined it, $0 \not \in R$.

Now fix a tessellation by $\Omega$.  Without loss of generality at least one vertex in $\overline \Omega$ is at the origin.  The tessellation defines hyperplanes in $\R^n$ which contain the boundary faces of the copies of $\Omega$ in the tessellation.  The way we have defined the set $R$, it contains all the normal directions of all these hyperplanes.  Since $R$ is a finite set, and there are countably many hyperplanes defined by the tessellation, this means that for each $\bv \in R$, we may enumerate the hyperplanes whose normal direction is $\pm \bv$ by $H_{\bv, k}$ for $k \in \Z$.  These hyperplanes are parallel.  Consider those $\bv \in R$ such that there is a hyperplane, denoted by $H_{\bv, 0}$, whose normal vector is $\pm \bv$ and which contains the origin.  We will define the length of $\bv$ so that the closest parallel hyperplanes to $H_{\bv, 0}$ are 
$$H_{\bv, \pm 1} = \{ \bx \in \R^n : \bx \cdot \bv = \pm 1\}.$$

Since the hyperplanes $H_{\bv, 1}$ and $H_{\bv,0}$ are parallel, there is a point $\by_\bv \in \R^n$ with $\by_\bv=c_\bv \bv$ for some $c_\bv \in \R$, where $\bv$ is the normal vector of (currently) unknown length.  This point $\by_\bv$ is the unique point in $H_{\bv, 1}$ which is closest to the origin.  Since $\by_\bv \in H_{\bv,1}$ we have by definition 
$$\by_\bv \cdot \bv = c_\bv ||\bv||^2 = 1.$$
We know the location of $H_{\bv, 1}$ since it is given by the tessellation, so we also know the length of $\by_\bv$.  
Using the above equation, 
$$||\by_\bv|| = |c_\bv| ||\bv||, \quad \by_\bv \cdot \bv = c_\bv ||\bv||^2 = 1 \implies c_\bv = \frac{1}{||\bv||^2} \implies ||\by_\bv|| = \frac{1}{||\bv||},$$
so we conclude that 
$$||\bv|| = \frac{1}{||\by_\bv||}.$$
Geometrically, 
$$||\by_\bv|| = \textrm{ the distance between the hyperplanes $H_{\bv,0}$ and $H_{\bv, \pm1}$}.$$
We also have 
$$||\bv|| = \frac{1}{||\by_\bv||}, \quad \by_\bv = \frac{\bv}{||\bv||^2}.$$
A schematic image of this is given in Figure \ref{fig:prop_pic}.
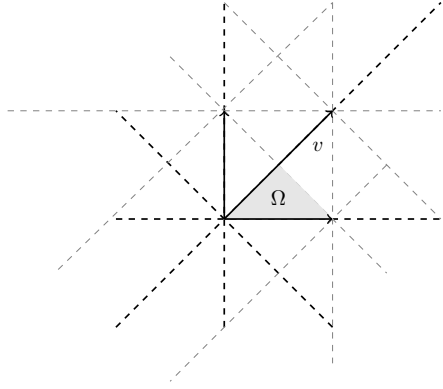
\begin{figure}[h]
   \centering
    \scalebox{0.8}{\begin{tikzpicture}
\begin{scope}[scale = 1.2]
    \begin{scope}
      \draw[black,dashed, thick] (-1.5,0) -- (3,0);
       \draw[black,dashed, thick] (0,-1.5) -- (0,3);
        \draw[black,dashed, thick] (-1.5,-1.5) -- (3,3);
       \draw[black,dashed, thick] (1.5,-1.5) -- (-1.5,1.5);
       \draw[gray,dashed] (-2.3,-0.7) -- (1.5,3);
    \draw[gray,dashed] (1.5,3) -- (1.5,-2);
        \draw[gray,dashed] (-3,1.5) -- (3,1.5);
    \end{scope}
    \begin{scope}[scale=0.707, rotate=45]
        \draw[gray,dashed] (1.5,3) -- (1.5,-3);
    \end{scope}
    \begin{scope}[scale=0.707, rotate=45]
        \draw[gray,dashed] (3,3) -- (3,-3);
    \end{scope}
    \begin{scope}[scale=0.707, rotate=-45]
        \draw[gray,dashed] (1.5,3) -- (1.5,-3);
    \end{scope}
    
    \begin{scope}[shift={(1.5,0)},scale=0.707,rotate=135]
        \filldraw[gray!20] (0,0) -- (1.5,0) -- (1.5,1.5) -- cycle;
    \end{scope}
    \node at (0.75,0.3) {$\Omega$};
    \node at (1.3,1) {$v$};
    \draw[black, ->, thick] (0,0) -- (0,1.5);
    \draw[black, ->, thick] (0,0) -- (1.5,1.5);
    \draw[black, ->, thick] (0,0) -- (1.5,0);
    
\end{scope}
\end{tikzpicture}}
    \caption{Given a polytope $\Omega$, we construct the hyperplanes $H_{\bv,0}$, here in black dotted lines, and the normal vectors $\bv$. The set $\{H_{\bv,k}\}$ includes the gray dotted lines.}
    \label{fig:prop_pic}
    \end{figure}

To define the lengths of all the elements of $R$, take each vertex in a single copy of $\Omega$, and translate the entire tessellation so that the vertex is at the origin.  Repeat the above argument to define the lengths of all the vectors in $R$.  

For $\bv \in R$, reflection across a hyperplane in the tessellation whose normal direction is that of $\bv$ may only permute the ensemble of hyperplanes by virtue of the strict tessellation.  Let $\bw \in R$.  By possibly translating the entire picture, assume that there is a hyperplane in the tessellation with normal direction $\pm \bw$ and which contains the origin, such that the origin is a vertex of a copy of $\Omega$ in the tessellation.  Thus $H_{\bw, 0}$ is a hyperplane in the tesssellation.  Consider the reflection with normal direction $\bv$, denoted by $\sigma_\bv$, that is 
$$\sigma_\bv (\bx) = \bx - 2 \frac{ \bx \cdot \bv}{||\bv||^2} \bv.$$
Then $\sigma_\bv (0) = 0$.  Consequently, $\sigma_\bv (H_{\bw, 0})$ is another hyperplane in the strict tessellation which also contains the origin, thus it is $H_{\bu, 0}$ for some $\bu \in R$.  Similarly, we also have $\sigma_\bv (H_{\bw, 1}) = H_{\bu, j}$ for some $j \in \Z$.  Since $\sigma_\bv$ preserves the scalar product, for $\bx \in H_{\bw, 1}$, by definition 
$$\bx \cdot \bw = 1 \implies \sigma_\bv (\bx) \cdot \sigma_\bv (\bw) = 1.$$
Since $\sigma_\bv$ sends $\bx$ to a point in $H_{\bu, j}$ we also have 
$$\sigma_\bv (\bx) \cdot \bu = j.$$
Since $\sigma_\bv (H_{\bw,0}) = H_{\bu,0}$, we must have that $\sigma_\bv (\bw) = \alpha \bu$ for some $\alpha \in \R$.  Therefore combining with the above we obtain
$$1 = \bx \cdot \bw = \sigma_\bv (\bx) \cdot \sigma_\bv (\bw) = \alpha \sigma_\bv (\bx) \cdot \bu = \alpha j \implies \alpha = \frac 1 j.$$
So we have proven that 
$$\sigma_\bv (\bw) = \frac 1 j \bu.$$

Now let us consider what happens to $\by_\bw$ when we reflect by $\sigma_\bv$.  The vector $\by_\bw$ is orthogonal to the hyperplanes $H_{\bw,0}$ and $H_{\bw, 1}$.  The vector goes from the origin in $H_{\bw,0}$ and since its length is the distance between $H_{\bw,0}$ and $H_{\bw, 1}$, the end of this vector sits in $H_{\bw, 1}$.  When this vector is reflected by $\sigma_\bv$, it will again start from the origin and have its endpoint lying on one of the parallel hyperplanes, by virtue of the strict tessellation.  We compute explicitly that 
$$\sigma_\bv (\by_\bw) = \by_\bw - 2 \by_\bw \cdot \bv \frac{\bv}{||\bv||^2} = \by_\bw - 2 (\by_\bw \cdot \bv) \by_\bv.$$
On the other hand, since $\sigma_\bv (\bw) = \frac 1 j \bu$, we compute using the definitions of $\by_\bw$, $\by_\bv$, and $\by_\bu$ that 
$$\sigma_\bv (\by_\bw) = \sigma_\bv \left( \frac{\bw}{||\bw||^2} \right) = \frac{1}{||\bw||^2} \sigma_\bv (\bw) = \frac{1}{||\bw||^2} \frac 1 j \bu.$$
Now, since 
$$||\bu||^2 = j^2 ||\bw||^2 \implies \sigma_\bv (\by_\bw) = j \left( \frac{\bu}{||\bw||^2 j^2} \right) = j \by_\bu.$$
Combining these calculations we obtain 
$$\sigma_\bv (\by_\bw) = \by_\bw - 2 (\by_\bw \cdot \bv) \by_\bv = j \by_\bu \implies 2 (\by_\bw \cdot \bv) \by_\bv = \by_\bw - j \by_\bu.$$
The vector $\by_\bw$ goes from the origin to $H_{\bw, 1}$, while the vector $-j \by_\bu$ goes from the origin to $H_{\bu, -j}$.  By vector addition and the strict tessellation, the sum $\by_\bw - j \by_\bu$ must go from the origin and end precisely at one of the parallel hyperplanes.  Consequently, the vector 
$$2 (\by_\bw \cdot \bv) \by_\bv$$
must be an integer multiple of $\by_\bv$ because it goes from the origin in the direction of $\by_\bv$ and lands at one of the parallel hyperplanes $H_{\bv, k}$ for some $k \in \Z$.  Therefore, 
$$2 (\by_\bw \cdot \bv) = k \in \Z.$$
By the definitions of $\by_\bw$ and $\bv$, 
$$2 (\by_\bw \cdot \bv) = 2 \frac{\bw \cdot \bv}{||\bw||^2} = k \in \Z.$$
In a similar way, reversing the roles of $\bw$ and $\bv$, we also obtain 
$$2 \frac{\bv \cdot \bw}{||\bv||^2} \in \Z.$$
Since $\bw, \bv \in R_\Omega$ were arbitrary, this shows the final condition needed for $R_\Omega$ to be a root system in Definition \ref{def:rootsystem} is satisfied.  We conclude that $R_\Omega$ is a root system and that $\Omega$ is one of its alcoves.  
\end{proof} 

The proofs of Theorem \ref{thm1} and Corollary \ref{cor0} will now follow from Propositions \ref{prop1thm1} and \ref{prop2thm1} and B\'erard's Proposition \ref{prop1}.   

\subsubsection{Proof of Theorem \ref{thm1}}
In Proposition \ref{prop1thm1} we proved that if $\Omega$ is a polytope, then the statements in Theorem \ref{thm1} satisfy:  $1 \implies 2$.  In Proposition \ref{prop2thm1}, we proved that the statements in Theorem \ref{thm1} satisfy:  $2 \implies 3$.  Finally, by B\'erard's Proposition \ref{prop1}, statement $3$, that $\Omega$ is an alcove, implies that all the eigenfunctions of $\Omega$ are trigonometric.  All trigonometric eigenfunctions are real analytic on $\R^n$ and satisfy the Laplace eigenvalue equation on $\R^n$.  Consequently, $3 \implies 1$.  The three statements are therefore equivalent.  
\qed

\subsubsection{Proof of Corollary \ref{cor0}} 
If the first eigenfunction of a polytope in $\R^n$ satisfies the hypotheses of Theorem \ref{thm1}, then the polytope is an alcove.  By B\'erard's Proposition \ref{prop1}, all of the eigenfunctions of the polytope are trigonometric.   
\qed

\section{Concluding remarks and conjectures}  \label{s:fug} 
We have now answered the analysis question:  when does a polytope in $\R^n$ have a complete set of trigonometric eigenfunctions for the Laplace eigenvalue equation?  In geometric terms, the necessary and sufficient condition for a polytope to have a complete set of trigonometric eigenfunctions is that the polytope strictly tessellates $\R^n$.  In algebraic terms, in the language of Bourbaki,  the equivalent necessary and sufficient condition is that the polytope is congruent to a fundamental domain of a crystallographic Coxeter group \cite[VI.25 Proposition 9 p. 180]{bourbaki}, \cite[p. 179]{berard}.  Returning to the analysis problem, it is interesting to note that it is enough to know that the first eigenfunction is real analytic and satisfies the Laplace eigenvalue equation on $\R^n$ to conclude that it is a trigonometric function and moreover, \em all \em the eigenfunctions are trigonometric.  This is a remarkable fact.  Moreover, the equivalence of analytic, geometric, and algebraic statements shows that these different areas of mathematics are intimately connected.  The Fuglede conjecture similarly brings together different areas of mathematics in the study of a single question.

\subsection{The Fuglede Conjecture} 
To state the Fuglede Conjecture, we introduce a few concepts.  
\begin{definition} 
A domain $\Omega \subset \R^d$ is said to be a \em spectral set \em if there exists $\Lambda \subset \R^n$ such that the functions 
$$\{ e^{2\pi i \lambda \cdot \bx} \}_{\lambda \in \Lambda}$$
are an orthogonal basis for $\cL^2 (\Omega)$.  The set $\Lambda$ is then said to be \em a spectrum of $\Omega$, \em  and $(\Omega, \Lambda)$ is called \em a spectral pair.  \em 
\end{definition} 

To relate these notions to our work here, we observe that if a domain $\Omega$ were to have all its eigenfunctions for the Laplace eigenvalue equation of the form $e^{2\pi i \lambda \cdot \bx}$, then these functions would comprise an orthogonal basis for $\cL^2 (\Omega)$. Consequently, knowing that the eigenfunctions are precisely of this form implies that the domain is a spectral set.  However, the converse is not true, in the sense that if $\Omega$ is a spectral set, then its eigenfunctions are not necessarily individual complex exponential functions.  If $\Omega$ is a spectral set, then the eigenfunctions must be linear combinations of the $e^{2\pi i \lambda \cdot \bx}$, since these are a basis for $\cL^2 (\Omega)$.  However, the linear combinations could have countably infinitely many terms, so it is not clear what precise form the eigenfunctions will take.  

\begin{conj}[Fuglede \cite{fug}] Every domain of $\R^n$ which has positive Lebesgue measure is a spectral set if and only if it tiles $\R^n$ by translation.  
\end{conj}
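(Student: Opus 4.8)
The plan is to prove the two implications separately, since they are of very different character, and then to confront the fact that the statement, posed for \emph{all} domains, is not provable in the form given.

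\textbf{Tiling $\Rightarrow$ spectral.} I would start from the model case in which $\Omega$ tiles $\R^n$ by translations along a full-rank lattice $L$, so that $\Omega$ agrees, up to a set of measure zero, with a fundamental domain of $L$. Then the dual lattice $L^{*}=\{\xi\in\R^n:\xi\cdot\ell\in\Z\ \text{ for all }\ell\in L\}$ is a spectrum: after normalizing by $|\Omega|^{-1/2}$ the exponentials $\{e^{2\pi i\lambda\cdot\bx}\}_{\lambda\in L^{*}}$ are orthonormal on $\Omega$, and their completeness in $\cL^{2}(\Omega)$ is precisely Poisson summation applied to the hypothesis that the $L$-translates of $\Omega$ partition $\R^n$. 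This is the same bookkeeping that underlies B\'erard's Proposition \ref{prop1}, where the lattice of weights $\Gamma^{*}$ plays the role of $L^{*}$. The first genuine obstacle is that a tiling need not be periodic, still less a lattice tiling; in the periodic case one descends to a sublattice and reduces to a tiling of the finite abelian group $\Z^n/L$, hoping to lift a spectrum from there, while in the aperiodic case there is at present no general mechanism.

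\textbf{Spectral $\Rightarrow$ tiling.} Here the object to track is $\xi\mapsto|\widehat{\mathbf{1}_{\Omega}}(\xi)|^{2}$. A Parseval computation shows that $(\Omega,\Lambda)$ is a spectral pair exactly when
$$\sum_{\lambda\in\Lambda}\bigl|\widehat{\mathbf{1}_{\Omega}}(\xi-\lambda)\bigr|^{2}=|\Omega|^{2}\quad\text{for a.e. }\xi\in\R^n,$$
that is, $\Lambda$ ``tiles the frequency side'' with the weight $|\widehat{\mathbf{1}_{\Omega}}|^{2}$. The aim would be to convert this identity in frequency into an honest tiling in physical space: the zero set of $\widehat{\mathbf{1}_{\Omega}}$ constrains the difference set $\Lambda-\Lambda$, and one wants to extract from this a translation set $T$ with $\sum_{t\in T}\mathbf{1}_{\Omega}(\bx-t)=1$ almost everywhere. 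For convex $\Omega$ and in low dimensions this program has been carried out (Iosevich--Katz--Tao in the plane, Lev--Matolcsi in general), but it is delicate even there.

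\textbf{The main obstacle.} The decisive difficulty is that the conjecture, in the stated generality, is \emph{false} for $n\ge 3$: Tao exhibited a spectral set in $\R^{5}$ that does not tile, and subsequent work of Kolountzakis, Matolcsi, and others refuted both implications in dimensions $\ge 3$. A realistic target is therefore a structured subclass, and the one closest to the present paper is the class of polytopes. Here Theorem \ref{thm1} gives a concrete entry point: a polytope whose first eigenfunction is analytic is an alcove, so the geometric side is already pinned down, and one would then determine directly which alcoves admit an orthogonal exponential basis for $\cL^{2}(\Omega)$. One expects the answer to be exactly the alcoves of root systems that are products of copies of $A_{1}$, i.e. the rectangular boxes, which are spectral by Fourier series and tile $\R^n$ by an obvious lattice of translations; the remaining alcoves are simplices, or products of simplices with lower-dimensional alcoves, hence not centrally symmetric, and so, by a result of Kolountzakis, not spectral — and, by the Venkov--McMullen characterization of translational tiles, not translational tiles either. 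Carrying out this dichotomy rigorously would establish Fuglede's equivalence within the class of polytopes covered by Theorem \ref{thm1}. One should note that the exponentials appearing in such a spectrum are \emph{not} the Dirichlet eigenfunctions of B\'erard's Proposition \ref{prop1} --- those vanish on $\pa\Omega$ and so are never single exponentials --- so the spectral-set question for a polytope is genuinely separate from, though parallel to, the eigenfunction question resolved here.
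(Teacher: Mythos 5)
There is nothing in the paper to compare your proposal against: the statement you were asked to prove is the Fuglede Conjecture itself, which the paper states as a conjecture, attributes to \cite{fug}, and does not prove. Indeed no proof is possible in the stated generality: as the paper's own survey records, Tao \cite{tao} and then Farkas--Kolounzakis--Matolcsi--M\'ora \cite{fkmm}, \cite{km}, \cite{km2}, \cite{mat} show the equivalence fails in dimensions $\geq 3$, while the cases $n=1,2$ for general domains remain open, and only restricted classes (lattice fundamental domains \cite{fug}, convex planar domains \cite{ios}, convex polytopes in $\R^3$ \cite{green}, convex domains in all dimensions \cite{lm}) are settled affirmatively. Your proposal correctly recognizes this, so declining to ``prove'' the conjecture is the right call; what you have written is a survey of the standard machinery plus a program, not a proof, and it should be judged as such. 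The technical content you do assert is essentially sound: the lattice case via the dual lattice and periodization is Fuglede's own theorem, and the Parseval reformulation that $(\Omega,\Lambda)$ is spectral if and only if $\sum_{\lambda\in\Lambda}|\widehat{\mathbf{1}_\Omega}(\xi-\lambda)|^2=|\Omega|^2$ a.e.\ is the correct ``tiling of frequency space by $|\widehat{\mathbf{1}_\Omega}|^2$'' identity.

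Two remarks on your restricted program for alcoves. First, the dichotomy you sketch is plausible: alcoves are convex polytopes, the only centrally symmetric ones are the boxes (products of rank-one factors), and for the non-symmetric ones both sides of Fuglede's equivalence fail, by Kolountzakis' symmetry theorem for spectral convex bodies and by Venkov--McMullen for translational tiles; but note that this verifies the equivalence on a class where it essentially reduces to rectangles, so it adds little beyond what \cite{lm} already gives for all convex domains, and carrying it out would still require citing those external results rather than anything proved in this paper. Second, the paper itself responds to Fuglede differently: rather than attacking exponential bases, it formulates its own related conjecture (that a general domain has a complete set of trigonometric Dirichlet eigenfunctions if and only if it is a strictly tessellating polytope, equivalently an alcove), and it explicitly notes, as you also correctly observe, that spectra in Fuglede's sense consist of single exponentials which are never Dirichlet eigenfunctions of a bounded domain, so the two questions are parallel but independent.
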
 

Fuglede proved in 1974 that the conjecture holds if one assumes that the domain is the fundamental domain of a lattice \cite{fug}.  Only several years later, in 2003, was further progress made by Iosevich, Katz, and Tao \cite{ios} who proved that the Fuglede conjecture is true if one restricts to convex planar domains.  In the following year, Tao proved that the Fuglede Conjecture is false in dimension 5 and higher \cite{tao}.  In 2006, the works of Farkas, Kolounzakis, Matolcsi and Mora \cite{fkmm}, \cite{km}, \cite{km2}, \cite{mat} proved that the conjecture is also false for dimensions 3 and 4.  In 2017, Greenfeld and Lev proved that Fuglede's Conjecture is true if one restricts attention to domains which are convex polytopes, but only in $\R^3$ \cite{green}.  In 2019, Lev and Matolcsi proved that Fuglede's Conjecture is true if one restricts attention to convex domains, in any dimension \cite{lm}.  Interestingly, the Fuglede Conjecture is still an open problem for arbitrary domains in dimensions one and two.  Here we make the following conjecture which is related to yet independent from Fuglede's.  

\begin{conj} Let $\Omega$ be a domain in $\R^n$.  Then $\Omega$ has a complete set of trigonometric eigenfunctions for the Laplace eigenvalue equation with the Dirichlet boundary condition if and only if $\Omega$ is a polytope which strictly tessellates $\R^n$.  Equivalently, $\Omega$ has a complete set of trigonometric eigenfunctions for the Laplace eigenvalue equation with the Dirichlet boundary condition if and only if $\Omega$ is an alcove. 
\end{conj}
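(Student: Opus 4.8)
One implication is already in hand: if $\Omega$ is a polytope which strictly tessellates $\R^n$, then by Proposition~\ref{prop2thm1} it is an alcove, and by B\'erard's Proposition~\ref{prop1} it has a complete set of trigonometric eigenfunctions; conversely Corollary~\ref{cor1} shows every alcove is a strictly tessellating polytope, so the two geometric descriptions in the statement are interchangeable. Thus only the reverse implication is open: \emph{if a bounded domain $\Omega\subset\R^n$ has a complete set of trigonometric eigenfunctions for the Dirichlet Laplacian, then $\Omega$ is a polytope.} Granting this, the first eigenfunction is trigonometric, hence extends to a real analytic function on $\R^n$ solving the eigenvalue equation everywhere, so Proposition~\ref{prop1thm1} shows $\Omega$ strictly tessellates and Proposition~\ref{prop2thm1} identifies it as an alcove. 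In other words, the entire content of the conjecture beyond Theorem~\ref{thm1} is that a domain with a complete set of trigonometric eigenfunctions must have piecewise hyperplanar boundary. (We take as a standing hypothesis that the Dirichlet Laplacian of $\Omega$ has discrete spectrum, e.g. $\partial\Omega$ Lipschitz, since otherwise a ``complete set of eigenfunctions'' is not meaningful; obtaining even this much a priori regularity of $\partial\Omega$ is part of what must be reckoned with.)

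The plan is to study the first eigenfunction $u_1$, with eigenvalue $\lambda_1$. It extends to a nonzero real analytic function on $\R^n$ with $\Delta u_1 = \lambda_1 u_1$; by Theorem~\ref{t:lam1} it does not vanish in $\Omega$, and $\partial\Omega\subseteq N:=\{\bx\in\R^n:u_1(\bx)=0\}$. The nodal set $N$ is a real analytic hypersurface away from its singular set $\{u_1=0,\ \nabla u_1=0\}$, which for a Laplace eigenfunction has dimension at most $n-2$. Hence, off a set of dimension at most $n-2$, the boundary $\partial\Omega$ coincides locally with an open piece $S$ of a connected real analytic hypersurface, with $\Omega$ lying on one side of $S$ and $u_1>0$ there. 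The crux is to show that every such $S$ is an open subset of an affine hyperplane $\{\bM\cdot\bx=b\}$; once this is established, a covering and connectedness argument over $\partial\Omega$, together with an induction on dimension, yields the structure required by Definition~\ref{def:polytope}, and the machinery of \S\ref{s:st}--\S\ref{s:proof} then finishes the proof as indicated above.

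I expect the crux --- excluding curved boundary pieces --- to be the main obstacle. It cannot be settled by analyticity of $u_1$ alone: a single trigonometric eigenfunction can vanish on a curved real analytic hypersurface, so flatness must be forced by the interplay of $u_1$ being the simple, sign-definite first eigenfunction on $\Omega$, the global validity of the eigenvalue equation, and --- crucially --- the \emph{completeness} of the trigonometric system. The natural first move is a reflection argument in the spirit of Lam\'e's Fundamental Theorem: fix $p\in S$, let $H_p$ be the tangent hyperplane to $S$ at $p$, let $\sigma$ be the orthogonal reflection across $H_p$, and compare $u_1$ with the global eigenfunction $-u_1\circ\sigma$. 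This closes immediately when $S$ is flat (then $S=H_p$ near $p$ and one recovers antisymmetry across $S$ as in the polytope case), but when $S$ is curved the Cauchy data of $u_1$ sit on $S$ rather than on $H_p$, $\sigma$ fails to be a symmetry of the configuration, and no contradiction is visible from $u_1$ alone. The leverage that remains is that \emph{every} Dirichlet eigenfunction $u_k$ of $\Omega$ is trigonometric and, being real analytic and vanishing on a relatively open subset of the connected hypersurface $S$, vanishes identically on $S$; thus $S$ is swept out by the common zeros of an infinite family of trigonometric functions whose frequency lengths $\sqrt{\lambda_k}$ tend to infinity, and one wants to argue that high-frequency plane-wave combinations cannot all vanish on a hypersurface with non-vanishing second fundamental form. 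Making this quantitative --- for instance via an oscillatory-integral or heat-trace estimate tying the asymptotics of the counting function of $\Omega$ to the local geometry of $\partial\Omega$ --- is, I believe, where the real difficulty lies, and it is plausible that the argument simplifies in the plane, or under the additional assumption that $\Omega$ is convex, where stronger tiling and spectral-set results are available.
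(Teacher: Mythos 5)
The statement you are working on is presented in the paper as a \emph{conjecture}: the paper gives no proof of it, and in fact its concluding discussion pinpoints exactly the obstruction you encounter, namely that there is no replacement for Lam\'e's results once the boundary is not assumed flat, and that trigonometric eigenfunctions can vanish on curved real analytic sets (the example $u(x,y)=\sin(x)+\sin(y)+\sin((x+y)/\sqrt{2})$ of Figure~\ref{fig:nullset}, whose nodal set bounds non-polytopal domains on which it is the first Dirichlet eigenfunction). Your reduction of the conjecture is correct and consistent with the paper's framework: the implication from ``strictly tessellating polytope'' (equivalently ``alcove,'' by Corollary~\ref{cor1} and Proposition~\ref{prop2thm1}) to ``complete set of trigonometric eigenfunctions'' follows from B\'erard's Proposition~\ref{prop1}, and, granted that $\Omega$ is a polytope, the converse follows from Proposition~\ref{prop1thm1} applied to the (trigonometric, hence real analytic) first eigenfunction. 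So the entire open content is the step you isolate: a domain with a complete set of trigonometric Dirichlet eigenfunctions must have piecewise hyperplanar boundary.

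That step, however, is not proved in your proposal; it is only described as a hoped-for argument (``oscillatory-integral or heat-trace estimate'' ruling out boundary pieces with non-vanishing second fundamental form), with no actual mechanism supplied. As you yourself note, the reflection argument of \S\ref{s:st} closes only when the local boundary piece is already flat, and analyticity of $u_1$ alone cannot force flatness precisely because of the Figure~\ref{fig:nullset} phenomenon; the completeness of the trigonometric eigenbasis must enter quantitatively, and no one currently knows how to make it do so. There are also smaller points that would need care even within your plan (e.g.\ enough a priori boundary regularity for $\partial\Omega\subseteq\{u_1=0\}$ and for the spectrum to be discrete, and the propagation of vanishing along a connected analytic hypersurface, which replaces the paper's Vanishing Planes Lemma). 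In short: your proposal is an accurate reduction plus an honest research plan that matches the paper's own assessment of where the difficulty lies, but it does not close the gap --- which is exactly why the paper states this only as a conjecture.
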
 

\begin{figure}[h] \centering
\includegraphics[width=8cm]{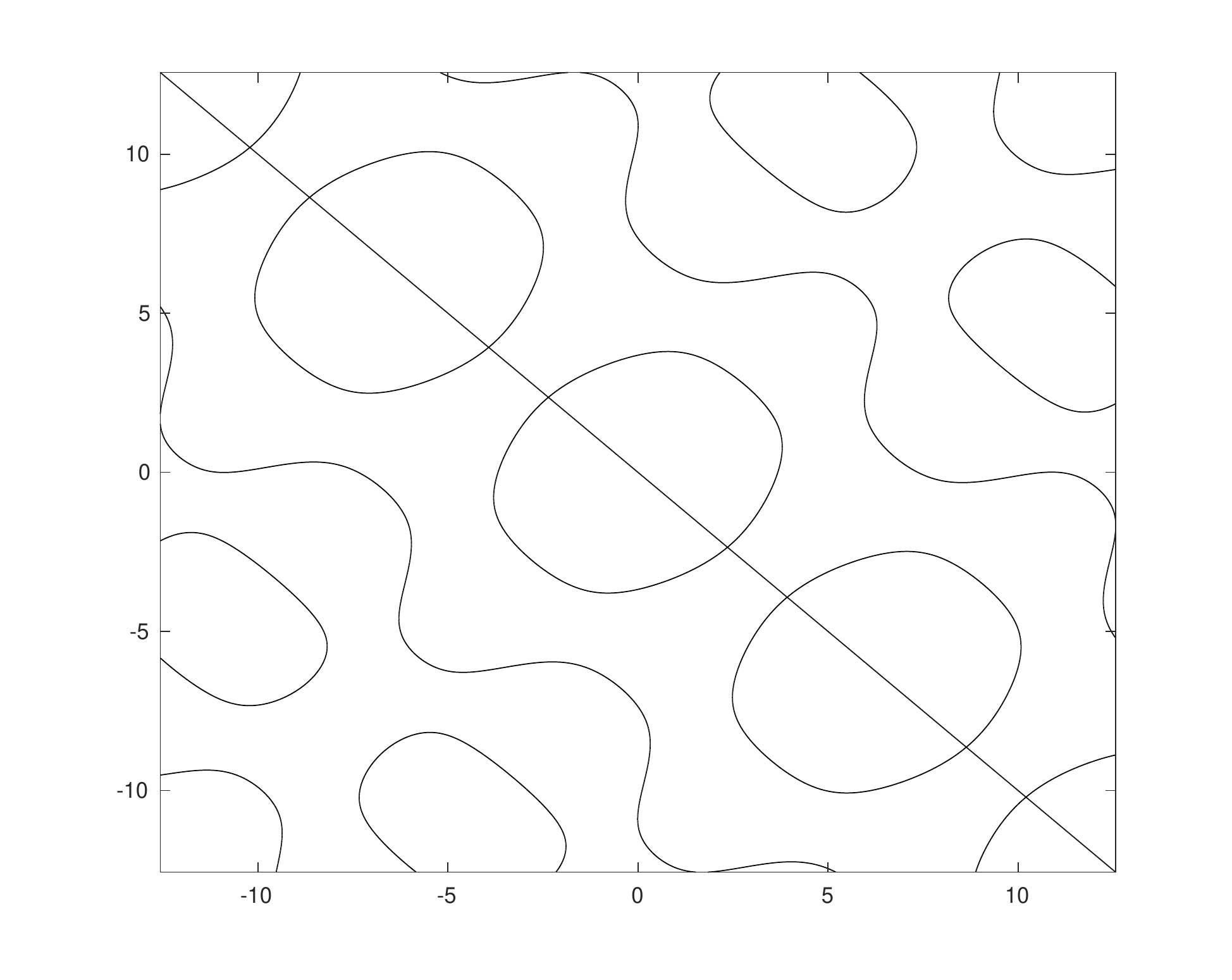} 
\caption{This figure shows the null set of the function 
$u(x,y) = \sin(x)+\sin(y)+\sin((x+y)/\sqrt{2})$ in a square shaped region of $\R^2$.  The null set includes the line $y=-x$ as well as the other curves in the region.  Consequently,  by uniqueness, this function is the first eigenfunction of the connected, open domains which are bounded by these curves, since it vanishes on the boundary but not on the interior and satisfies the Laplace eigenvalue equation.  Hence the first eigenfunction satisfies the first condition of Theorem \ref{thm1}, but we do not obtain any further conclusions because the domain is not a polytope.}
\label{fig:nullset} 
\end{figure}

The difficulty in treating arbitrary domains is that we do not have a replacement for Lam\'e's results which are central to our proof.  Moreover, it is possible to construct linear combinations of trigonometric functions which vanish on curved regions; an example is given in Figure \ref{fig:nullset}. Consequently, we cannot immediately conclude that domains which have trigonometric eigenfunctions have flat boundary faces, and hence they are polytopes.  A domain with a curved boundary could have a few trigonometric eigenfunctions.  What is reasonable to expect, however, is that it does not have a \em complete \em set of trigonometric eigenfunctions.  
\subsection{The crystallographic restriction theorem and a geometric approach to the Goldbach Conjecture} 
The vertices of the strict tessellation given by a polytope which is an alcove are in fact the set of points in a full rank lattice.  We note that two different polytopes may give rise to the same lattice; for example an isosceles right triangle and the square obtained by two copies of that triangle will produce the same lattice.   For any discrete group of isometries of $\R^n$, an element $g$ of such a group has finite order if there is an integer $k>0$ such that $g$ composed with itself $k$ times is the identity.  The minimal such $k$ is the order of $g$.  To state the crystallographic restriction theorem, we define a function which is like an extension of the Euler totient function.  For an odd prime $p$ and $r\geq 1$, 
$$\psi(p^r) := \phi(p^r), \quad \phi(p^r) = p^r - p^{r-1}.$$  
Above, $\phi$ is the Euler totient function.  The Euler totient function of a positive integer $n$ counts the positive integers which are relatively prime with and less than or equal to $n$.  So, for example, for an odd prime $p$, the positive integers which are not relatively prime with $p^r$ are $p$, $2p$, $3p$, \ldots $p^{r-1} p = p^r$.  There are $p^{r-1}$ of these.  All other positive integers are relatively prime with $p^r$, hence $\phi(p^r) = p^r - p^{r-1}$.  The function $\psi$ is further defined as follows:
$$\psi(1) = \psi(2) = 0, \,  \psi(2^r) :=\phi(2^r) \textrm{ for } r>1,$$ 
and 
$$\textrm{ for } m = \prod_i p_i ^{r_i}, \quad  \psi(m) := \sum_i \psi(p_i ^{r_i}).$$

\begin{theorem}[Crystallographic Restriction I]  For any discrete group $G$ of isometries of $\R^n$, for $n \geq 2$, the set of orders of the elements $G$ which have finite order is equal to 
$$\Ord_n = \{ m \in \N : \psi(m) \leq n \}.$$
\end{theorem}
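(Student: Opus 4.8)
The plan is to prove the two containments separately, in both cases reducing to an arithmetic statement about the torsion elements of $\GL(n,\Z)$. (The argument uses that $G$ contains a full‑rank lattice of translations, which holds for crystallographic, i.e.\ co‑compact, $G$ by Bieberbach's theorem; for a general discrete $G$ — say a finite cyclic rotation group — the stated equality can fail, so the hypothesis should be read in the crystallographic sense.) So let $L\triangleleft G$ be the normal translation subgroup, a full‑rank lattice of finite index. If $g\in G$ has finite order $m$, write $g(\bx)=A\bx+\bw$ with $A\in O(n)$. I would first observe that $A$ has order \emph{exactly} $m$: if $A$ has order $d$, then $g^{d}$ is a translation, and being of finite order it is the identity, so $m\mid d$; since also $d\mid m$, we get $d=m$. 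Next, $g\,t_{\bv}\,g^{-1}=t_{A\bv}$, so normality of $L$ forces $A(L)=L$, and in a $\Z$‑basis of $L$ the map $A$ becomes a matrix $M\in\GL(n,\Z)$ of order $m$. Conversely, given any $M\in\GL(n,\Z)$ of order $m$, one equips $\R^{n}$ with an inner product invariant under the finite group $\langle M\rangle$ (average the standard one), whereupon $\Z^{n}\rtimes\langle M\rangle$ is a co‑compact discrete group of isometries of $\R^{n}$ containing an element of order $m$. Thus it suffices to show
\[
\{\,m\in\N : \GL(n,\Z)\text{ has an element of order }m\,\}\ =\ \{\,m\in\N:\psi(m)\le n\,\}.
\]

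For the inclusion $\subseteq$, let $M\in\GL(n,\Z)$ have order $m$. Since $M$ annihilates the squarefree polynomial $X^{m}-1$, its minimal polynomial is a product of distinct cyclotomic polynomials, $\mu_{M}=\prod_{d\in S}\Phi_{d}$ with each $d\mid m$; as $M$ is then diagonalizable and of order exactly $m$, one has $\operatorname{lcm}(S)=m$. From $\mu_{M}\mid\chi_{M}$ and $\deg\chi_{M}=n$ we obtain $\sum_{d\in S}\phi(d)=\deg\mu_{M}\le n$. The heart of the proof is the elementary but delicate inequality
\[
\sum_{d\in S}\phi(d)\ \ge\ \psi(m)\qquad\text{whenever }\operatorname{lcm}(S)=m,
\]
which I would establish by picking, for each prime power $q=p^{r}$ exactly dividing $m$, some $d(q)\in S$ divisible by $q$, partitioning the $q$'s according to the chosen $d(q)$, and bounding $\phi(d(q))$ below using that $a\mid b\Rightarrow\phi(a)\mid\phi(b)$ (so $\phi(d)$ dominates the product of $\phi(q)$ over the prime powers $q\parallel m$ dividing $d$) together with the fact that a product of integers $\ge 2$ dominates their sum. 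The one exceptional prime power is $2^{1}$, where $\phi(2)=1$: a lone factor of $2$ in $m$ costs nothing, being absorbed into the product attached to an odd block — which is exactly why $\psi(2)=0$, and why $\psi(2^{r})=\phi(2^{r})$ only for $r\ge 2$. Summing the local estimates yields $\sum_{q\parallel m}\psi(q)=\psi(m)$.

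For the inclusion $\supseteq$, assume $\psi(m)\le n$ and write $m=2^{a}\prod_{i}p_{i}^{r_{i}}$ with the $p_{i}$ odd. When $a=0$ or $a\ge 2$, the block‑diagonal matrix assembled from the companion matrices of $\Phi_{2^{a}}$ (omitted when $a=0$) and of the $\Phi_{p_{i}^{r_{i}}}$ lies in $\GL(\psi(m),\Z)$ and has order $\operatorname{lcm}(2^{a},p_{1}^{r_{1}},\dots)=m$, since the companion matrix of $\Phi_{d}$ lies in $\GL(\phi(d),\Z)$ and has order $d$. When $a=1$, i.e.\ $m\equiv 2\pmod{4}$, one instead takes $-M'$, where $M'\in\GL(\psi(m),\Z)$ realizes the odd part $m/2$ (and $\psi(m/2)=\psi(m)$); since $m/2$ is odd, $-M'$ has order $m$. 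The trivial cases $m=1,2$ are handled by $I_{n}$ and by $-I_{1}\oplus I_{n-1}$. Padding with an identity block then places the matrix in $\GL(n,\Z)$, which is possible precisely because $\psi(m)\le n$; feeding it into the construction of the first paragraph produces a crystallographic group of $\R^{n}$ with an element of order $m$.

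The geometry here is soft: once Bieberbach's full‑rank lattice is in hand, passing between $G$, its point group, and $\GL(n,\Z)$ is routine. The real obstacle is the arithmetic inequality $\sum_{d\in S}\phi(d)\ge\psi(m)$ and its sharpness, and essentially all of the subtlety is concentrated at the prime $2$: getting that bookkeeping right is exactly what dictates the \emph{ad hoc} normalization $\psi(1)=\psi(2)=0$, $\psi(2^{r})=\phi(2^{r})$ for $r>1$, and what makes $\Ord_{n}$ come out to precisely $\{m:\psi(m)\le n\}$ rather than a set off by the Euler totient of a power of two.
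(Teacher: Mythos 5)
The paper never proves this theorem: it is stated as background, quoted from the classical literature and from Bamberg--Cairns--Kilminster \cite{goldbach}, and the paper's own content here is only the reformulation as Crystallographic Restriction II and the link to Goldbach. So there is no in-paper proof to compare against; judged on its own, your argument is correct and is the standard route. You are right that the statement cannot hold for arbitrary discrete groups (a finite cyclic rotation group in $\R^2$ already has elements of every order), so the hypothesis must be read in the crystallographic (co-compact) sense; note that even then the equality has to be read as ranging over all such groups, since a pure translation lattice has only the identity as a torsion element, and your proof delivers exactly that reading: containment of the torsion orders in $\Ord_n$ for every crystallographic $G$, plus realization of each $m$ with $\psi(m)\le n$ by $\Z^n\rtimes\langle M\rangle$ after averaging the inner product. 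The Bieberbach reduction to $\GL(n,\Z)$ (i.e.\ to the paper's Crystallographic Restriction II) is clean, and the arithmetic core is sound: the minimal polynomial of a torsion matrix is a product of distinct cyclotomics $\Phi_d$, $d\in S$, with $\operatorname{lcm}(S)=m$, giving $\sum_{d\in S}\phi(d)\le n$; and the inequality $\sum_{d\in S}\phi(d)\ge\psi(m)$, which you only sketch, does close as indicated, since each prime power $q$ exactly dividing $m$ with $q\mid d$ also exactly divides $d$, so $\phi(d)\ge\prod_{q\in Q_d}\phi(q)$, and a product of integers $\ge 2$ dominates their sum; the exceptional $q=2$ is harmless simply because $\psi(2)=0$, so no ``absorption into an odd block'' is even needed. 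The converse construction from companion matrices of the $\Phi_{p^r}$, with the sign trick $-M'$ when $m\equiv 2\pmod 4$ and identity padding, is likewise correct. In short: a complete and correct proof of a result the paper leaves to the references, with a sensible repair of the paper's overly literal hypothesis.
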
 

The crystallographic restriction theorem is connected to the mathematics of crystals when we reformulate the theorem in the context of lattices.  A full-rank lattice is a set of points in $\R^n$ of the form
\begin{equation} \label{lattice1} \Gamma = \{ \bp \in \R^n : \bp = L \bx, \quad L \in \GL(n, \R), \quad \bx \in \Z^n \}. \end{equation} 
Above, $\GL(n, \R)$ is the set of $n \times n$ invertible matrices with real entries, and $\Z^n$ are the elements of $\R^n$ whose entries are integers.  We say that the matrix $L$ generates the lattice $\Gamma$.  The generating matrix $L$ is not unique, because for any $M \in \GL (n , \Z)$ the set of points in \eqref{lattice1} is equal to 
$$ \{ \bp \in \R^n : \bp = LM \bx, \quad \bx \in \Z^n \}.$$ 
Here $\GL(n, \Z)$ is the group of invertible $n \times n$ matrices whose entries are integers. Note that to be a group, this requires the determinant of all elements of $\GL(n, \Z)$ to be equal to $\pm 1$.  Two matrices $L_1, L_2 \in \GL(n, \R)$ generate the same lattice if and only if there is an $M \in \GL(n, \Z)$ such that $L_1 = L_2 M$.  For a matrix $M \in \GL(n, \Z)$, we identify it with the isometry of $\R^n$ which maps $\bx \in \R^n$ to $M\bx$. The matrices in $\GL(n, \Z)$ can therefore be identified with the group of symmetries of the crystal whose atoms lie on the points of the lattice.   Hence, the order of $M$ is equal to the smallest positive integer $k$ such that $M^k$ is the identity matrix.  It turns out that the set of orders of the elements of any discrete group $G$ of isometries of $\R^n$ which have finite order is equal to the set of orders of the elements of $\GL(n, \Z)$.  Consequently, the crystallographic restriction may be reformulated as follows.  

\begin{theorem}[Crystallographic Restriction II] \label{th:crystal} For any $n \geq 2$, the set of orders of the elements of $\GL(n, \Z)$ is equal to 
$$\Ord_n = \{ m \in \N : \psi(m) \leq n \}.$$
\end{theorem}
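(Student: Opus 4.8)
The plan is to prove the sharper statement that for every $m\in\N$ there exists $M\in\GL(n,\Z)$ of order exactly $m$ if and only if $\psi(m)\le n$; since $\Ord_n$ is by definition the set of such realized orders, the theorem follows at once. The argument combines the structure of finite-order rational matrices with one elementary number-theoretic inequality for $\psi$.

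First I would establish the constraint. If $M\in\GL(n,\Z)$ has order $m$, then $M^m=I$, so the minimal polynomial of $M$ divides $x^m-1=\prod_{d\mid m}\Phi_d$, where $\Phi_d$ denotes the $d$-th cyclotomic polynomial. Since $x^m-1$ is squarefree over $\Q$, this minimal polynomial equals $\prod_{d\in S}\Phi_d$ for a set $S$ of divisors of $m$, and $M$ is diagonalizable over $\C$ with eigenvalues exactly the primitive $d$-th roots of unity for $d\in S$. A direct computation shows that $M^k=I$ holds precisely when $d\mid k$ for all $d\in S$, so the order of $M$ equals $\operatorname{lcm}(S)$, whence $\operatorname{lcm}(S)=m$. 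Writing the characteristic polynomial as $\prod_{d\in S}\Phi_d^{a_d}$ with each $a_d\ge 1$ and using $\deg\Phi_d=\phi(d)$, we get $n=\sum_{d\in S}a_d\phi(d)\ge\sum_{d\in S}\phi(d)$. Conversely, for any finite set $S$ with $\operatorname{lcm}(S)=m$, the block-diagonal matrix formed from the companion matrices of the $\Phi_d$, $d\in S$ --- each in $\GL(\phi(d),\Z)$ since $\Phi_d$ is monic with constant term $\pm 1$ --- and padded with an identity block of the appropriate size has order exactly $\operatorname{lcm}(S)=m$ and lies in $\GL(n,\Z)$, as soon as $\sum_{d\in S}\phi(d)\le n$ (and in the degenerate cases $m\le 2$ one simply takes $\pm I_n$). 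Thus the orders realized by $\GL(n,\Z)$ are exactly the $m$ for which some $S$ with $\operatorname{lcm}(S)=m$ satisfies $\sum_{d\in S}\phi(d)\le n$, and it remains to show that the minimum of $\sum_{d\in S}\phi(d)$ over such $S$ equals $\psi(m)$.

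This identification is the crux. Write $m=\prod_i p_i^{r_i}$. For the lower bound, note that every $d\in S$ divides $m$, and for each index $i$ with $\phi(p_i^{r_i})\ge 2$ --- that is, all $i$ except possibly the one with $p_i^{r_i}=2$, which contributes $0$ to $\psi$ --- choose some $d^{(i)}\in S$ divisible by $p_i^{r_i}$. Grouping these indices by the value of $d^{(i)}$ and setting $I_d=\{i:d^{(i)}=d\}$, the product $\prod_{i\in I_d}p_i^{r_i}$ divides $d$, so $\phi(d)\ge\prod_{i\in I_d}\phi(p_i^{r_i})\ge\sum_{i\in I_d}\phi(p_i^{r_i})$, the last step being the elementary fact that a product of integers each at least $2$ is at least their sum. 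Summing over $d\in S$ yields $\sum_{d\in S}\phi(d)\ge\sum_i\phi(p_i^{r_i})=\psi(m)$, the sum running over the relevant indices. For the matching construction, take $S=\{p_i^{r_i}\}_i$ when $m$ is odd, $S=\{2^{r_0}\}\cup\{p_i^{r_i}\}_i$ when the exponent $r_0$ of $2$ in $m$ is at least $2$, and $S=\{2p_1^{r_1}\}\cup\{p_i^{r_i}\}_{i\ge 2}$ when $r_0=1$ (with $m>2$); in every case $\operatorname{lcm}(S)=m$ and $\sum_{d\in S}\phi(d)=\psi(m)$, the last case being exactly where merging the factor $2$ into an odd prime power, rather than carrying it on its own, realizes the convention $\psi(2)=0$.

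I expect the main obstacle to be bookkeeping rather than mathematical depth: keeping the exceptional role of the prime $2$, and the conventions $\psi(1)=\psi(2)=0$, uniform across both the construction and the inequality, and making sure throughout that what is being realized, and what is being constrained, is the exact order $m$ and not merely the relation $M^m=I$. Once the problem is reduced to minimizing $\sum_{d\in S}\phi(d)$ subject to $\operatorname{lcm}(S)=m$, the remaining number theory is short. A useful internal consistency check is that $\psi$ takes only even values, so $\Ord_{2k}=\Ord_{2k+1}$ for every $k$, in agreement with the stated formula.
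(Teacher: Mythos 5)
Your proposal is correct, but note that the paper does not prove this theorem at all: Crystallographic Restriction II is quoted there as a known classical result (in the expository section leading to the Goldbach reformulation, with \cite{goldbach} as the reference), so there is no internal proof to compare against. What you give is essentially the standard argument from the literature, and it is sound: finite order forces the minimal polynomial to divide the squarefree polynomial $x^m-1$, hence to be a product of cyclotomic polynomials $\Phi_d$, $d\in S$, with $\operatorname{lcm}(S)=m$, giving the constraint $n\ge\sum_{d\in S}\phi(d)$; block-diagonal companion matrices of the $\Phi_d$ (which lie in $\GL(\phi(d),\Z)$ since $\Phi_d$ has constant term $\pm1$), padded by an identity block, realize any admissible $S$; and the number-theoretic core is that the minimum of $\sum_{d\in S}\phi(d)$ over sets with $\operatorname{lcm}(S)=m$ equals $\psi(m)$, which your two-sided argument (the inequality $\phi(d)\ge\prod_{i\in I_d}\phi(p_i^{r_i})\ge\sum_{i\in I_d}\phi(p_i^{r_i})$ for the lower bound, and the explicit choices of $S$ that absorb a single factor of $2$ into an odd prime power for the upper bound) establishes correctly. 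The one place to word carefully is $m\in\{1,2\}$: there the minimum of $\sum_{d\in S}\phi(d)$ is $1$, not $\psi(m)=0$, so the literal claim ``minimum equals $\psi(m)$'' fails in these degenerate cases; you already sidestep this by taking $\pm I_n$, but it would be cleaner to phrase the reduction as ``$m$ is realized in $\GL(n,\Z)$ if and only if $\psi(m)\le n$'' and treat $m\le 2$ separately from the outset, rather than asserting the minimization identity for all $m$. Your parity check that $\psi$ is always even, so $\Ord_{2k}=\Ord_{2k+1}$, is a nice confirmation and consistent with the statement.
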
 

In \cite{goldbach}, Bamberg, Cairns and Kilminster proved that one may reformulate the Strong Goldbach Conjecture in terms of the orders of elements of $\GL (n, \Z)$.  

\begin{conj}[Strong Goldbach] Every even natural number greater than six can be written as the sum of two distinct odd primes.  
\end{conj}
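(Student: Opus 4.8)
The plan is to follow the Hardy--Littlewood circle method, which remains the only framework that has produced genuine progress on binary additive problems involving primes. For an even $N$, let $r(N)$ count ordered pairs of primes $(p,q)$ with $p+q=N$, and write $r(N) = \int_0^1 S(\alpha)^2 e^{-2\pi i N \alpha}\, d\alpha$ with $S(\alpha) = \sum_{p \le N} (\log p)\, e^{2\pi i p \alpha}$. First I would split $[0,1]$ into major arcs around rationals $a/q$ with $q \le (\log N)^A$ and the complementary minor arcs. On the major arcs, the Siegel--Walfisz theorem in the form of an asymptotic for $S(a/q + \beta)$ yields, after summation, the expected main term of size $\mathfrak{S}(N)\, N$, where the singular series $\mathfrak{S}(N) = \prod_{p \mid N} \frac{p-1}{p-2} \prod_{p \nmid N} \bigl(1 - \frac{1}{(p-1)^2}\bigr)$ is bounded below by an absolute positive constant for every even $N$. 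The remaining and decisive step would be to bound the minor-arc contribution by $o(N)$; this would give $r(N) > 0$ for all sufficiently large even $N$, and then a finite check together with the elementary observation that $2$ cannot occur (since $N-2$ is even) and that a repeated prime forces $N = 2p$ would upgrade ``sum of two primes'' to ``sum of two distinct odd primes'' for every even $N > 6$.

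The hard part --- the reason the conjecture has stood open for nearly three centuries --- is precisely the minor-arc estimate. The sharpest available bounds for $S(\alpha)$ at arguments with moderately large denominator, obtained via Vinogradov's method and its refinements, fall short by a power of $\log N$: they suffice for the ternary problem (three primes), resolved unconditionally by Helfgott, and they yield that almost all even $N$ are sums of two primes, but they do not control $r(N)$ for every individual $N$. Remarkably, even the Generalized Riemann Hypothesis is not known to close this gap, so a proof appears to require a fundamentally new sieve-theoretic or additive-combinatorial device, going beyond Chen's theorem that every large even $N$ is a prime plus a product of at most two primes. I expect no variant of the classical approach to succeed without such an input.

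The results of this paper suggest an orthogonal, algebraic route rather than an analytic shortcut. Via the theorem of Bamberg, Cairns, and Kilminster, the Strong Goldbach statement is equivalent to an assertion about orders of elements of $\GL(n,\Z)$, which by Theorem \ref{th:crystal} is governed by the function $\psi$ and the sets $\Ord_n = \{ m \in \N : \psi(m) \le n \}$; combined with the geometric equivalence of Theorem \ref{thm1}, one could aim to recast Goldbach purely in terms of the finite-order symmetries of strictly tessellating polytopes and the full-rank lattices they generate, and then try to deduce the required additive decompositions from the rigidity of crystallographic lattices, of which the crystallographic restriction theorem is itself an instance. I expect this reformulation to be clean but not decisive: the crystallographic restriction theorem encodes exactly the definition of $\psi$ and carries no arithmetic information about primes beyond it, so the genuine obstacle --- producing, for every even $N > 6$, an actual pair of prime summands --- would remain untouched and still demand new analytic number theory.
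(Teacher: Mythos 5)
There is a genuine gap, and you have named it yourself: the entire argument hinges on bounding the minor-arc contribution to $\int_0^1 S(\alpha)^2 e^{-2\pi i N\alpha}\,d\alpha$ by $o(N)$ for \emph{every individual} even $N$, and no such bound is known or proposed here. Pointwise estimates for $S(\alpha)$ on the minor arcs (Vinogradov's method and its refinements) lose too much relative to the expected main term in the binary problem; they suffice for three prime summands and, via an $L^2$ average over $N$, for almost all even $N$, but they do not control a single fixed $N$, and even GRH is not known to close this gap. So what you have written is a correct description of the standard strategy and of why it fails, not a proof; the decisive step is exactly the content of the open conjecture. Note also a small slip in your singular series: since $N$ is even, the factor $\frac{p-1}{p-2}$ must be restricted to odd primes $p\mid N$ (the local factor at $2$ is what produces the usual constant $2$ in front), otherwise the product is undefined at $p=2$. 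Your handling of distinctness and oddness (excluding $p=2$ since $N-2$ is even, and noting a repeated prime forces $N=2p$) is fine but irrelevant until $r(N)>0$ is established.

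For comparison, the paper does not prove this statement and does not claim to: it is stated as a conjecture, and the only rigorous content offered is the equivalence of Theorem \ref{th:gold} (Bamberg, Cairns, and Kilminster), which, together with the crystallographic restriction Theorem \ref{th:crystal}, recasts Goldbach as the existence, for each even $n\geq 6$, of an element of $\GL(n,\Z)$ of order $pq$ with $p+q=n+2$ and no such element in lower dimension. Your closing paragraph correctly assesses this route: the restriction theorem encodes only the function $\psi$, so the reformulation is a faithful translation rather than a reduction, and the paper itself concedes that the geometric approach is unlikely to yield new progress. In short, both your analytic plan and the algebraic reformulation leave the essential difficulty untouched, so the statement remains unproven here.
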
 

\begin{theorem}[Theorem 3 of \cite{goldbach}] \label{th:gold} The following statements are equivalent:
\begin{enumerate} 
\item The strong Goldbach conjecture is true; 
\item For each even $n \geq 6$ there is a matrix $M \in \GL(n, \Z)$ which has order $pq$ for distinct primes $p$ and $q$, and there is no matrix in $\GL(k, \Z)$ of order $pq$ for any $k < n$.
\end{enumerate} 
\end{theorem}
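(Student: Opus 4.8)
The plan is to reduce both statements to a single arithmetic assertion by means of the Crystallographic Restriction Theorem \ref{th:crystal}, and then to recognize that assertion as the Strong Goldbach conjecture.

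First I would record the elementary observation that, for distinct primes $p$ and $q$, the least dimension $n$ for which $\GL(n,\Z)$ contains an element of order $pq$ is exactly $\psi(pq)$. Indeed, by Theorem \ref{th:crystal}, for every $n \geq 2$ the group $\GL(n,\Z)$ has an element of order $pq$ if and only if $pq \in \Ord_n$, i.e. if and only if $\psi(pq) \leq n$; since $\psi(pq) \geq 2$ for any two distinct primes (with equality precisely when $pq = 6$), and since $\GL(1,\Z) = \{\pm 1\}$ plainly has no element of order $pq \geq 6$, the smallest admissible dimension is $\psi(pq)$ itself. Unwinding the two clauses of (2) — existence of an order-$pq$ element in dimension $n$, and nonexistence in all smaller dimensions — this shows that (2) is equivalent to the arithmetic statement that for every even $n \geq 6$ there exist distinct primes $p$ and $q$ with $\psi(pq) = n$.

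Next I would evaluate $\psi(pq)$ in closed form. Since $\psi$ is additive over the prime powers in a factorization, $\psi(pq) = \psi(p) + \psi(q)$: if $p$ and $q$ are both odd this is $\phi(p) + \phi(q) = (p-1)+(q-1) = p+q-2$, while if one of them, say $p$, equals $2$ it is $\psi(2) + \psi(q) = q-1$. Thus the arithmetic statement above asserts that for every even $n \geq 6$ either $n+1$ is prime or $n+2$ is a sum of two distinct odd primes. Writing $m = n+2$, so that $m$ ranges over the even integers $\geq 8$ — exactly the numbers addressed by the Strong Goldbach conjecture — the implication $(1) \Rightarrow (2)$ is immediate: if Strong Goldbach holds then $m = n+2$ is a sum $p+q$ of two distinct odd primes, whence $\psi(pq) = p+q-2 = n$ and (2) follows.

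For $(2) \Rightarrow (1)$ one runs the same translation: given even $m \geq 8$, put $n = m-2 \geq 6$, apply (2) to obtain distinct primes $p,q$ with $\psi(pq) = n$, and read off $p+q = m$ whenever both primes are odd. The one delicate point — and the main obstacle — is the disposal of the prime $2$: a pair of the form $\{2, n+1\}$ with $n+1$ prime also satisfies $\psi(pq) = n$ without exhibiting $m$ as a sum of two distinct odd primes. Handling this requires either reading ``distinct primes'' in (2) as distinct odd primes (the natural convention in view of the statement of Strong Goldbach), or a short supplementary argument ruling out that this escape route occurs in isolation for all even $n$. Apart from this parity bookkeeping around the prime $2$, the proof consists entirely of the translation through Theorem \ref{th:crystal} together with the formula $\psi(pq) = p+q-2$ for distinct odd primes established above.
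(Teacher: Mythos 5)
Your proposal follows essentially the same route as the paper's own justification: the paper does not reprove this theorem (it is quoted from \cite{goldbach}), but its accompanying discussion is exactly your translation through the Crystallographic Restriction Theorem \ref{th:crystal} together with the identity $\psi(pq)=p+q-2$, so that minimality of the dimension $n$ forces $p+q=n+2$. Your computation is correct, and your ``delicate point'' about the prime $2$ is a genuine and well-spotted issue with the statement as transcribed here: since $\psi(2)=0$, one has $\psi(2q)=q-1$, so when $n+1$ is prime the pair $(2,n+1)$ already witnesses condition (2) in dimension $n$ without producing any representation of $n+2$ as a sum of two distinct odd primes; hence with ``distinct primes'' read literally, $(2)\Rightarrow(1)$ does not follow from this translation (and is not known to hold). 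The resolution is the one you suggest: the intended statement, as in Theorem 3 of \cite{goldbach} and as reflected in the paper's own subsequent question about ``two odd primes $p\neq q$ such that $p+q=n+2$,'' is for distinct \emph{odd} primes, and with that reading your argument in both directions is complete.
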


The Goldbach Conjecture is an extremely difficult problem.  Difficult, long-standing open problems have sometimes been solved by translating the problem into a different field of mathematics.  The proof of Fermat's Last Theorem, also a statement in number theory, was achieved using newly developed techniques in algebraic geometry \cite{wiles1}, \cite{wiles2}.  To approach the Goldbach Conjecture geometrically, we ask 
\begin{question} \em Is there a geometric reason for the existence of a symmetry for full rank lattices in $\R^n$, with $n\geq 6$ an even number, such that this symmetry is of order $pq$ for two odd primes $p \neq q$ such that $p+q=n+2$?
\em 
\end{question} 
The condition that there is no matrix in $\GL(k, \Z)$ of order $pq$ for any $k<n$ is equivalent to requiring $p+q = n+2$.  This follows from the Crystallographic Restriction Theorem \ref{th:crystal} which states that the orders of the elements of $\GL(k, \Z)$ is equal to the set of non-negative integers $m$ with $\psi(m) \leq k$.  In order to guarantee that 
$$\psi(pq) = p+q-2 > k \textrm{ for all } k < n, \textrm{ but } \psi(pq) \leq n \implies \psi(pq) = p+q-2 = n.$$

Consequently, the symmetry of order $pq$ would correspond to a matrix $M \in \GL(n, \Z)$ which does not admit a diagonal decomposition into two matrices of smaller dimensions.  Geometrically, this matrix would not arise as a product of symmetries of $\R^k$ and $\R^{n-k}$ for any $k=1, \ldots, n-1$.  It would be a new symmetry occurring first in $\R^n$.  Since \cite{goldbach} already realized the connection between the Goldbach Conjecture and the Crystallographic Restriction Theorem, this geometric approach is pretty unlikely to lead to any new developments.  Nonetheless, it is interesting that a famous number theoretic conjecture can be equivalently phrased as a simple question about the orders of symmetries of full-rank lattices in $\R^n$.


\begin{thebibliography}{99} 

\bibitem{graph2} N. A. Abdel Ghany, S. A. Elsherif and H. T. Handal, \em Revolution of Graphene for different applications:  State-of-the-art, \em Surfaces and Interfaces, vol. 9, 93--106 (2017). 

\bibitem{goldbach} J. Bamberg, G. Cairns, and D. Kilminster, \em The Crystallographic Restriction, Permutations, and Goldbach's Conjecture, \em The American Math Monthly, vol. 110, no. 3, 202--209, (2003).  

\bibitem{berard} P. B\'erard, \em Spectres et groupes cristallographiques I:  Domaines euclidiens, \em Inventiones Math, 58, 179--199 (1980).  

\bibitem{bieber1} L. Bieberbach, \em Ueber die Bewegungsgruppen der Euklidischen Räume I, \em  Math. Ann. 70,  297--336, (1911).  

\bibitem{bieber2} L. Bieberbach, \em Ueber die Bewegungsgruppen der Euklidischen Räume II, \em  Math. Ann.  72, 400--412 (1912).  

\bibitem{bourbaki} N. Bourbaki, \em Groupes et alg\`ebres de Lie, Chapitres 4 \`a 6. \em  Act. Scient. et Industrielles 1337 Paris:  Hermann (1968).  

\bibitem{courhil} R. Courant, D. Hilbert, \em  Methoden der Mathematischen Physik, \em Vols. I, II,
Interscience Publishers, Inc., New York, (1943).  

\bibitem{evans} L. C. Evans, \em Partial Differential Equations, \em Second Edition, Series: Graduate Studies in Mathematics (Book 19), American Mathematical Society, (2010) ISBN-10: 0821849743, ISBN-13: 978-0821849743. 

\bibitem{fkmm} B. Farkas, M. Kolounzakis, M. Matolcsi, P. M\'ora, \em On Fuglede's conjecture and the existence of universal spectra, \em  J. Fourier Anal. Appl. 12 (5), (2006), 483--494. 

\bibitem{fed} E. S. Fedorov, \em Symmetry in the plane, \em Proc. of the Imperial St. Petersburg Mineralogical Society, series 2, 28, 345--390, (1891).  (Russian) 

\bibitem{folland} G. B. Folland, \em Fourier analysis and its applications, \em The Wadsworth \& Brooks/Cole Mathematics Series. Wadsworth \& Brooks/Cole Advanced Books \& Software, Pacific Grove, CA, (1992).


\bibitem{fug} B. Fuglede, \em Commuting self-adjoint partial differential operators and a group theoretic problem, \em  J. Func. Anal. 16, (1974), 101--121. 

\bibitem{green} R. Greenfeld and N. Lev, \em Fuglede's spectral set conjecture for convex polytopes, \em to appear in Analysis \& PDE, \url{https://arxiv.org/abs/1602.08854}.

\bibitem{ios} A. Iosevich, N. Katz, and T. Tao, \em The Fuglede spectral conjecture hold for convex planar domains, \em Math. Res. Lett. 10 (5–6), (2003), 556--569.

\bibitem{jones} O. Jones, \em The grammar of ornament, \em London:  Day and Son, (1856).  

\bibitem{killing} W. Killing, \em Die Zusammensetzung der stetigen endlichen Transformationsgruppen, \em  Mathematische Annalen, part 1: vol.  31, no. 2, (1888), 252--290, part 2, vol. 33, no. 1 (1888), 1--48, part 3, vol. 34, no. 1 (1889), 57--122, vol. 36, no. 2 (1890), 161--189.  

 \bibitem{km} M. Kolounzakis and M. Matolcsi, \em Tiles with no spectra, \em Forum Math. 18 (3), (2006), 519--528.
 
 \bibitem{km2} M. Kolounzakis and M. Matolcsi, \em  Complex Hadamard Matrices and the spectral set conjecture, \em  Collect. Math. Extra, (2006), 281--291.

\bibitem{lame} G. Lam\'e, \em M\'emoire sur la propagation de la chaleur dans les poly\`edres, \em Journal de l'\'Ecole Polytechnique, vol. 22, (1833), 194--251.  

\bibitem{lm} N. Lev and M. Matolcsi, \em The Fuglede conjecture for convex domains is true in all dimensions, \em \url{https://arxiv.org/abs/1904.12262}. 

\bibitem{mat} M. Matolcsi, \em Fuglede's conjecture fails in dimension 4, \em  Proc. Amer. Math. Soc. 133 (10), (2005), 3021--3026. 

\bibitem{mccartin} B. J. McCartin, \em On polygonal domains with trigonometric eigenfunctions of the Laplacian under Dirichlet or Neumann boundary conditions, \em Applied Math Sciences, vol. 2, no. 58, 2891--2901, (2008).  

\bibitem{graph1} K. S. Novoselev, A. K. Geim, S. V. Morozov, D. Jiang, Y. Zhang, S. V. Dubonos, I. V. Grigorieva, and A. A> Firsov, \em Electric Field Effect in Atomically Thin Carbon Films, \em Science, vol. 306, issue 5696, 666--669, (2004).  

\bibitem{graph3} K. S. Novoselov, V. I. Fal'ko, L. Colombo, P. R. Gellert, M. G. Schwab, and K. Kim, \em A roadmap for graphene, \em Nature, 490, 192--200 (2012). 

\bibitem{graph4} D. G. Papageorgiou, I. A. Kinloch, and R. J. Young, \em Mechanical properties of graphene and graphene-based nanocomposites, \em Progress in Materials Science vol. 90, 75--127, (2017).

\bibitem{schoen} A. Schoenflies, \em Kristallsysteme und Kristallstruktur, \em Teubner, (1891).  

\bibitem{tao} T. Tao, \em Fuglede's conjecture is false on 5 or higher dimensions, \em Math. Res. Lett. 11 (2–3), (2004), 251--258.

\bibitem{wiles1} A. Wiles, \em Modular elliptic curves and Fermat's Last Theorem, \em Annals of Mathematics. 141 (3), (1995), 443--551. 

\bibitem{wiles2} A. Wiles \em Ring theoretic properties of certain Hecke algebras, \em Annals of Mathematics. 141 (3), (1995) 553--572. 

\bibitem{graph5} Y. Zheng, Z. Zhen, and H. Zhu, \em Graphene:  Fundamental research and potential applications, \em FlatChem vol. 4, 20--32, (2017).  


\end{thebibliography}
\end{document}